\newtheorem{teo}{Theorem}[section]
\newtheorem{lema}[teo]{Lemma}
\newtheorem{prop}[teo]{Proposition}
\newtheorem{corol}[teo]{Corollary}
\newtheorem{exe}[teo]{Example}
\newtheorem{defini}[teo]{Definition}
\newtheorem*{thmA}{Theorem A}
\begin{document}

\title{Commuting graph of a group on a transversal}

\author{Julio C. M. Pezzott$^{a}$\footnote{The author acknowledges a scholarship from CAPES (Brazil)} , Irene N. Nakaoka\ $^{b}$ \vspace*{3mm}\\
 $^{a,b}$Universidade Estadual de Maring\'{a}, Brazil \\
$^{a}$ e-mail: juliopezzott@gmail.com \\
$^{b}$ e-mail: innakaoka@uem.br \\
 }

\maketitle
\begin{abstract}
Given a finite group $G$ and a subset $X$ of $G$, the commuting graph of $G$ on $X$, denoted by ${\cal C}(G,X)$, is the graph that has $X$ as its vertex set with $x,y\in X$ joined by an edge whenever $x\neq y$ and $xy=yx$. 
Let $T$ be a transversal of the center $Z(G)$ of $G$.  When $G$ is a finite non-abelian group and $X=T\setminus Z(G)$, we denote the graph ${\cal C}(G,X)$ by ${\cal T}(G)$. In this paper, we show that ${\cal T}(G)$ is a connected strongly regular graph if and only if $G$ is isoclinic to an extraspecial $2$-group of order at least $32$. We also characterize the finite non-abelian groups $G$ for which the graph ${\cal T}(G)$ is disconnected strongly regular.
\vspace{2mm}

\noindent {\em Keywords:} Commuting graph, isoclinism, extraspecial $p$-group, strongly regular graph.

\noindent {\em Mathematics Subject Classification 2010:} 20D15, 05C25, 05E30

\end{abstract}

\section{Introduction}

Let $X$ be a subset of a finite group $G$. The {\em commuting graph} of $G$ on the set $X$,  denoted by ${\cal C}(G,X)$, is the graph that has $X$ as its vertex set with $x,y\in X$ joined by an edge whenever $x\neq y$ and $xy=yx$. Commuting graphs were first studied by Brauer and Fowler in \cite{BF55} with $X=G\setminus \{1\}$. Many papers have investigated ${\cal C}(G,X)$ for different choices of  $X$. For example, the works \cite{BBHR,Fischer,SP} investigated the graph ${\cal C}(G,X)$ when $X$ consists of involutions and the papers \cite{akbari,GP2013,GP2014,morgan,VT2010}  considered   $X=G\setminus Z(G)$, where $Z(G)$ denotes the center of $G$. Let us denote the graph ${\cal C}(G,G\setminus Z(G))$  by $\Gamma(G)$ and refer to it simply as commuting graph of $G$.
We  observe that $\Gamma(G)$ highlights the relations of commutativity between non-central elements of the group $G$. However, given $x\in G$, it is clear that any two elements of the coset $xZ(G)$ commute with each other. Furthermore, as observed in \cite{VT2010}, vertices $x$ and $y$ are adjacent in $\Gamma (G)$ if and only if $g$ and $h$ are adjacent for all $g\in xZ(G)$ and $h\in yZ(G)$. 
In this way, we may investigate the relations of commutativity in $G$ by observing only the relations of commutativity that occur between  non-central elements of a transversal of $Z(G)$ in $G$. Thus,  in this work we consider the graph ${\cal C}(G,X)$ when $G$ is a finite non-abelian group and $X=T\setminus Z(G)$, where $T$ is a transversal of $Z(G)$ in $G$. Note that it is a subgraph of $\Gamma (G)$.
It is easy to see that if $T'$ is another transversal  of $Z(G)$ in $G$, then the graphs ${\cal C}(G,T\setminus Z(G))$ and ${\cal C}(G,T'\setminus Z(G))$ are isomorphic. Hence, we will denote the graph ${\cal C}(G,T\setminus Z(G))$ simply by ${\cal T}(G)$ without mentioning the choice of the transversal.  It is worth mentioning that this type of  graph has already been considered in \cite{GP2013,GP2014,VT2010}. Vahidi and Talebi \cite{VT2010} showed that the graphs $\Gamma (G)$ and ${\cal T}(G)$ have the same independence number and diameter. Moreover, if $ \omega({\cal G})$ denotes the clique number of the graph ${\cal G}$, then $\omega(\Gamma (G))=\omega({\cal T}(G))|Z(G)|$. In  \cite{GP2013,GP2014} the authors examined the graph ${\cal T}(G)$ in their study of the diameter of a commuting graph.

We prove that  ${\cal T}(G)$  presents the following relevant property: if $G$ and $H$ are isoclinic groups, then the graphs ${\cal T}(G)$ and ${\cal T} (H)$ are isomorphic (Proposition \ref{isoclinismocomutante}). This helps us to classify groups $G$  such that ${\cal T}(G)$ has certain particular properties.

We recall a graph is regular if each vertex has the same number of neighbors and it is $k$-regular if each vertex has exactly $k$ neighbours. A \emph{strongly regular graph } with parameters $(v,k,\lambda,\mu)$, where $0<k<v-1$, is a graph $k$-regular on $v$ vertices such that   each pair of adjacent vertices has precisely $\lambda$ common neighbours and 
 any two non-adjacent vertices have exactly $\mu$ common neighbours. 

Akbari and Moghaddamfar  \cite[Corollary 2]{akbari} showed that if $\Gamma(G)$ is strongly regular, then it is a disjoint union of at least two complete graphs  and, therefore, it is disconnected. In this work, we prove that there exist groups $G$ for which  the associated graph ${\cal T} (G)$ is   connected strongly regular. Moreover, we present a description of these groups; more precisely, we get:

\begin{thmA}  \label{gfrfinal}
Let $G$ be a finite non-abelian group. Then ${\cal T}(G)$ is a connected strongly regular graph if and only if $G$  admits a decomposition $G=A\times P$, where $A$ is an abelian subgroup of $G$ and $P$ is a Sylow $2$-subgroup of $G$  isoclinic to an extraspecial $2$-group of order $2^{2n+1}$, for some integer $n\geq 2$. In this case, the parameters of ${\cal T}(G)$ are $(2^{2n} - 1, 2^{2n-1} -2, 2^{2n-2}-3, 2^{2n-2}-1)$.
\end{thmA}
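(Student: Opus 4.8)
The plan is to prove the two implications separately, using Proposition~\ref{isoclinismocomutante} throughout to replace $G$ by a convenient group in its isoclinism class.

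\textbf{The ``if'' direction.} Suppose $G=A\times P$ with $A$ abelian and $P$ a Sylow $2$-subgroup isoclinic to an extraspecial $2$-group $E$ of order $2^{2n+1}$, $n\ge 2$. Since $A$ is abelian, $G$ is isoclinic to $P$, hence to $E$, so ${\cal T}(G)\cong{\cal T}(E)$ and it suffices to analyse ${\cal T}(E)$. I would write $V=E/Z(E)$, an $\f_2$-space of dimension $2n$, and let $\beta\colon V\times V\to\f_2$ be the alternating form induced by the commutator map; it is non-degenerate because its radical is $Z(E)/Z(E)$. For non-central $x,y$ one has $[x,y]=1$ iff $\beta(\bar x,\bar y)=0$, so ${\cal T}(E)$ is exactly the graph on the $2^{2n}-1$ nonzero vectors of $V$ in which $u$ and $w$ are joined iff $u\ne w$ and $\beta(u,w)=0$. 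Over $\f_2$ distinct nonzero vectors are linearly independent, so for any two vertices the common neighbours lie in $\langle u,w\rangle^{\perp}$, a subspace of dimension $2n-2$; a short count — using $\dim u^{\perp}=2n-1$, and that in the adjacent case ($\beta(u,w)=0$) each of $u,w,u+w$ lies in $\langle u,w\rangle^{\perp}$ whereas in the non-adjacent case none does — yields $k=2^{2n-1}-2$, $\lambda=2^{2n-2}-3$, $\mu=2^{2n-2}-1$. For $n\ge 2$ one has $0<k<v-1$ and $\mu\ge 1$, so ${\cal T}(E)$ is connected strongly regular with the asserted parameters; for $n=1$ already $k=0$, which is why $n\ge 2$ is forced.

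\textbf{The ``only if'' direction.} Assume ${\cal T}(G)$ is connected strongly regular with parameters $(v,k,\lambda,\mu)$, so $v=|G:Z(G)|-1$ and $\mu\ge 1$. The first step is elementary: the closed neighbourhood of a vertex $xZ(G)$ is the set of cosets of $Z(G)$ inside $C_G(x)$ other than $Z(G)$, whence $k=|C_G(x):Z(G)|-2$ and, crucially, $|C_G(x)|$ — equivalently the class size $|x^{G}|$ — is the same for every non-central $x$. I would then invoke the classical fact (It\^{o}) that a group with a single non-trivial conjugacy class size satisfies $G=A\times P$ with $A$ abelian and $P$ a $p$-group for some prime $p$ (this splitting can also be produced by hand), so $G$ is isoclinic to $P$ and one may assume $G=P$ is a $p$-group. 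The heart of the matter is to deduce $p=2$ and $|P'|=2$. Here regularity no longer suffices: one must use the strongly regular structure and, above all, connectedness ($\mu\ge 1$) — the $\lambda$- and $\mu$-equations together with the constant centraliser order should force the commutator pairing $P/Z(P)\times P/Z(P)\to P'$ to behave like a non-degenerate $\f_2$-valued alternating form, in particular $|P'|=2$. (For $p$ odd the obstruction is concrete: nonzero scalar multiples of a vertex are again vertices, and the number of common neighbours of a vertex with such a multiple differs from that with a linearly independent neighbour, so $\lambda$ cannot be constant.) I expect this step to be the main obstacle.

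\textbf{Finishing.} Granting $|P'|=2$, the rest is structural: $P'$ is central (the only automorphism of a group of order $2$ is trivial), so $P$ has class $2$; then $[x^2,y]=[x,y]^2=1$ shows $P/Z(P)$ has exponent $2$, i.e.\ is elementary abelian of order $2^{m}$, and non-degeneracy of the induced alternating form forces $m=2n$ even. Thus $P/Z(P)\cong\f_2^{2n}$, $P'\cong\f_2$ with the symplectic commutator form, so $P$ is isoclinic to the extraspecial group of order $2^{2n+1}$; combined with $G=A\times P$ this is the claimed decomposition, and ${\cal T}(G)\cong{\cal T}(P)$ has the parameters of the ``if'' part, while connectedness (with $0<k$) is exactly $\mu=2^{2n-2}-1\ge 1$, i.e.\ $n\ge 2$. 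As noted, the single genuinely non-routine point is the implication ``${\cal T}(G)$ connected strongly regular $\Rightarrow|P'|=2$''; extracting this from the parameter equations rather than from mere regularity is what separates the connected case from the disconnected situation of Akbari and Moghaddamfar in \cite{akbari} and from the companion result on disconnected strongly regular ${\cal T}(G)$ promised in the abstract.
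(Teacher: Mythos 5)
Your ``if'' direction is correct and in fact takes a cleaner route than the paper: you compute the parameters directly from the non-degenerate alternating form on $E/Z(E)\cong\mathbb{F}_2^{2n}$, whereas the paper (Proposition \ref{extraespecialGFR}) gets $[C_G(x):C_G(x)\cap C_G(y)]=2$ from Lemma \ref{lemaextraespecial} (an induction over central products) together with Ishikawa's theorem, and then applies Proposition \ref{grauX}. The reduction in your ``only if'' direction via It\^o's theorem, the passage to a $p$-group $P$ isoclinic to $G$, and the exclusion of odd $p$ (your scalar-multiple remark is essentially the paper's Lemma \ref{gfrpimpar}: $C_G(x)=C_G(x^2)$ forces $\lambda=k-1$ via Lemma \ref{condicaonecessariaconexo}) all match the paper.

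However, there is a genuine gap exactly where you say you ``expect the main obstacle'': you never prove that connected strong regularity forces $P$ to have conjugate type $\{1,2\}$ (equivalently $|P'|=2$ after the reduction); you only assert that the $\lambda$- and $\mu$-equations ``should force'' it. This is the actual content of the paper's Lemma \ref{srgconexo1} and Theorem \ref{srgconexo2}, and it is not a routine consequence of the parameter identity (\ref{relacaoparametrosGFR}) alone. The paper must first split into cases according to whether $|C_G(x)\cap C_G(y)|$ is constant over all pairs of distinct vertices: in the constant case, (\ref{relacaoparametrosGFR}) applied to $v=2^{m+n+r}-1$, $k=2^{m+n}-2$, $\lambda=2^m-3$, $\mu=2^m-1$ does force $n=r=1$; but in the non-constant case one of the two subcases survives the identity (\ref{relacaoparametrosGFR}) and leads to parameters $v=2^{2s+1}-1$, $k=2^{s+1}-2$, $\lambda=2^s-3$, $\mu=1$, which can only be eliminated by invoking the integrality of the eigenvalue multiplicities of a strongly regular graph (the equation $4^{s-1}=a^2-1$ has no solution). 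Without this second, Moore-graph-style argument your proof does not close; ``the commutator pairing behaves like a non-degenerate $\mathbb{F}_2$-form'' is the conclusion you need, not something the constant-centralizer-order hypothesis hands you. Once $|P'|=2$ is granted, your finishing argument (class $2$, elementary abelian central quotient, equivalence of non-degenerate symplectic forms over $\mathbb{F}_2$) is a legitimate substitute for the paper's appeal to Ishikawa's theorem.
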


We also characterize the finite non-abelian groups $G$ for which the graph ${\cal T}(G)$ is disconnected strongly regular (see Corollary \ref{caracterizacao2}).

\vspace{0.2cm}
In this text, we use the following notations and conventions: $C_n$ is the cyclic group of order $n$ and  $D_{2m}$ (with $m\geq 3$) is the dihedral group of order $2m$. Let $G$ be a group. For $x,y\in G$, the \emph{commutator} of $x$ and $y$ is given by $[x,y]=xyx^{-1}y^{-1}$ and the derived subgroup of $G$ is denoted by $G^{\prime}$. We write $C_G(x)$ for the \emph{centralizer} of $x$ in $G$.  The symbol $cs(G)$ represents the set formed by the sizes of the conjugation classes of the  elements of $G$. For $1<m_1<\ldots< m_n$, we say that $G$ is of \emph{conjugate type} $\{1,m_1,...,m_n\}$ if $cs(G)=\{1,m_1,...,m_n\}$. Thus, given a positive integer $m$, the group  $G$ is of conjugate type $\{1,m\}$ if $[G:C_G(x)]=m$, for any $x\in G\setminus Z(G)$.

Given a graph $\mathcal{G}$ its  vertex set  is represented by $V(\mathcal{G})$ and its edge set by $E(\mathcal{G})$. The neighborhood of a vertex $x$ of $\mathcal{G}$ is denoted by $N(x)$  $(=\{ y \in V(\mathcal{G}) \, :\, \{x,y\} \in E(\mathcal{G})\})$ with degree deg$\,x=|N(x)|$.   When the graphs $\mathcal{G}_1$ and $\mathcal{G}_2$ are isomorphic, we write $\mathcal{G}_1\cong \mathcal{G}_2$ and the disjoint union of the graphs $\mathcal{G}_1$ and $\mathcal{G}_2$ is denoted by $\mathcal{G}_1\cup \mathcal{G}_2$.  As usual, the complete graph on $n$ vertices is denoted by $K_{n}$ and the disjoint union of $r$ complete graphs $K_n$ is indicated by $rK_n$. The graph on $m$ vertices and without edges is represented by $I_m$. Other concepts and basic results on graphs can be seen in \cite{ChartrandZhang}.

\section{Preliminary results}

In this section we present some concepts and results which will be needed to the proof of Theorem A.

First, we observe  the commuting graph $\Gamma (G)$ of $G$ can be obtained from ${\cal T}(G)$  in the following manner: for each vertex $x$ of ${\cal T}(G)$, put $V_x =xZ(G)$. We can see that $\Gamma(G)$ is the graph that has $V:=\bigcup_{x\in V({\cal T}(G))} V_x$ as vertex set and such that a subset $\{a,b\}$ of $V$ is an edge of $\Gamma(G)$ if and only if one of the following conditions is satisfied: (1) $a,b\in V_x$, for some vertex $x$ of ${\cal T}(G)$; or (2) $a\in V_x$ and $b\in V_y$, where $x$ e $y$ are adjacent vertices in ${\cal T}(G)$.

\begin{exe}\label{M3}
\emph{Consider the group $M(3)=\langle a,b,c:a^{3},b^{3},c^{3},[a,b]c^{-1},[a,c],[b,c]\rangle$, which is an extraspecial $3$-group of order $27$. Since $Z(M(3))=\{e,c,c^2\}$, there are eight non-trivial cosets of $Z(M(3))$ in $M(3)$ and $T=\{1,a,a^2,b,b^2,ab,ab^2,a^2b,a^2b^2\}$ is a transversal of $Z(M(3))$ in $M(3)$. The commuting graph of $M(3)$ is isomorphic to $4K_6$ and it can be seen in  Figure \ref{figM3Delta}. Each connected component of $\Gamma(M(3))$ contains exactly two distinct  cosets of $Z(M(3))$. In the figure below,  in each  connected component of $\Gamma(M(3))$, vertices that are in the same coset of $Z(M(3))$ are represented with the same color. In  Figure \ref{figM3}, we see that ${\cal T}(M(3))\cong 4K_2$.}
\end{exe}

\vspace{.05cm}
\begin{figure}[h!]
\centering
\includegraphics[scale=0.45]{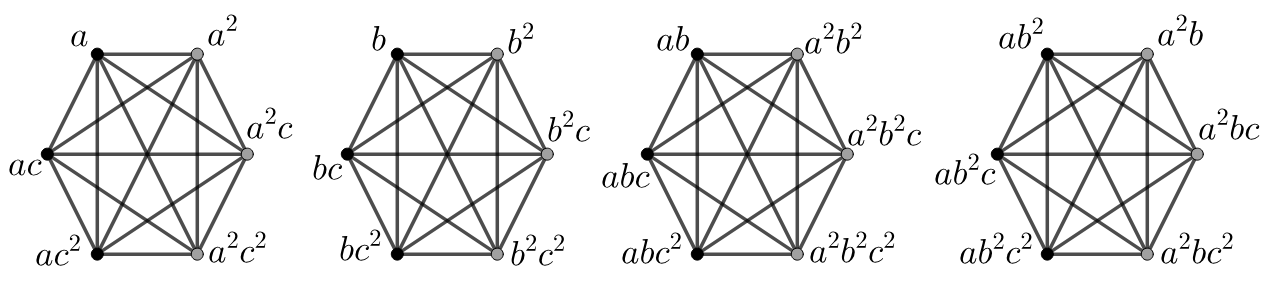}
\caption{$\Gamma(M(3))\cong 4K_6$ }\label{figM3Delta}
\end{figure}

\vspace{.1cm}
\begin{figure}[h!]
\centering
\includegraphics[scale=0.45]{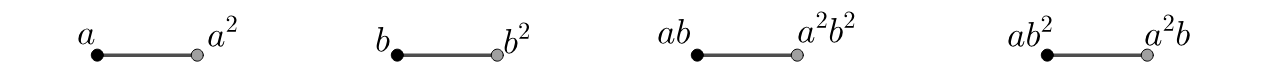}
\caption{${\cal T}(M(3))\cong 4K_2$ }\label{figM3}
\end{figure}

 The  concept below  was introduced by Hall \cite{hall}. We observe that for all group $G$,  the commutator map $\alpha_G:G/Z(G)\times G/Z(G)\rightarrow G'$  given by $\alpha_G(xZ(G),yZ(G))=[x,y]$  is well defined. 

\begin{defini}
\emph{Let $G$ and $H$ be two groups. A pair $(\varphi,\psi)$ is an \emph{isoclinism} from $G$ to $H$ if the following conditions are satisfied:}
\begin{description}
\item[(i)] \emph{ $\varphi$ is an isomorphism from $G/Z(G)$ to $H/Z(H)$;}
\item[(ii)]\emph{$\psi$ is an isomorphism from $G'$ to $H'$;}
\item[(iii)]\emph{  $\psi(\alpha_G(xZ(G),yZ(G)))=\alpha_H(\varphi(xZ(G)),\varphi(yZ(G)))$, for all  $x,y\in G$.} 
\end{description}
\emph{When there is an isoclinism from $G$ to $H$, we  say that the groups $G$ and $H$ are \emph{isoclinic}.}
\end{defini}

The relation of isoclinism is a equivalence relation on groups and the equivalence class of a group $G$ is called of \emph{isoclinism family of $G$}. 

In general, isoclinic groups do not  have isomophic commuting graphs, however for the graphs ${\cal T}(G)$ we have the following:

\begin{prop}\label{isoclinismocomutante}
Let $G$ and $H$ be finite non-abelian groups. If $G$ and $H$ are isoclinic, then the graphs ${\cal T}(G)$ and ${\cal T}(H)$ are isomorphic.
\end{prop}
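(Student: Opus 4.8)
The plan is to construct an explicit graph isomorphism ${\cal T}(G)\to{\cal T}(H)$ directly from the isoclinism pair $(\varphi,\psi)$. Fix an isoclinism $(\varphi,\psi)$ from $G$ to $H$, so that $\varphi\colon G/Z(G)\to H/Z(H)$ and $\psi\colon G'\to H'$ are isomorphisms compatible with the commutator maps via condition (iii). The vertex set of ${\cal T}(G)$ is a transversal of $Z(G)$ in $G$ with the central coset removed; equivalently, up to the isomorphism remarked in the introduction (different transversals give isomorphic graphs), we may identify $V({\cal T}(G))$ with the set of non-trivial cosets $G/Z(G)\setminus\{Z(G)\}$, and likewise for $H$. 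Under this identification, adjacency of two cosets $xZ(G)$, $yZ(G)$ in ${\cal T}(G)$ means exactly $[x,y]=1$, i.e. $\alpha_G(xZ(G),yZ(G))=1$, and similarly in ${\cal T}(H)$.

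The map I would use is simply $\varphi$ itself, restricted to the non-trivial cosets. First I would check it is well defined as a map of vertex sets: since $\varphi$ is an isomorphism $G/Z(G)\to H/Z(H)$, it sends $Z(G)$ (the identity coset) to $Z(H)$, hence carries non-trivial cosets to non-trivial cosets bijectively. Next, adjacency preservation: two distinct vertices $\bar x,\bar y\in G/Z(G)\setminus\{Z(G)\}$ are adjacent in ${\cal T}(G)$ iff $\alpha_G(\bar x,\bar y)=1_{G'}$. Applying $\psi$, which is an isomorphism of $G'$ onto $H'$ and hence sends $1_{G'}$ to $1_{H'}$ and nothing else to $1_{H'}$, condition (iii) gives $\psi(\alpha_G(\bar x,\bar y))=\alpha_H(\varphi(\bar x),\varphi(\bar y))$; thus $\alpha_G(\bar x,\bar y)=1_{G'}$ if and only if $\alpha_H(\varphi(\bar x),\varphi(\bar y))=1_{H'}$. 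Since $\varphi$ is injective, $\bar x\neq\bar y$ forces $\varphi(\bar x)\neq\varphi(\bar y)$, so edges go to edges and non-edges to non-edges. Therefore $\varphi$ restricts to a graph isomorphism ${\cal T}(G)\cong{\cal T}(H)$, and by the transversal-independence remark this is exactly the statement about ${\cal T}(G)$ and ${\cal T}(H)$.

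I do not expect a genuine obstacle here; the proof is essentially a matter of unwinding definitions and being careful about the bookkeeping between "transversal of $Z(G)$ minus $Z(G)$" and "non-trivial cosets $G/Z(G)$". The one point deserving a sentence of care is the reduction allowing us to replace the abstract transversal-based vertex set by $G/Z(G)\setminus\{Z(G)\}$: this is precisely the observation, already noted in the introduction, that ${\cal C}(G,T\setminus Z(G))\cong{\cal C}(G,T'\setminus Z(G))$ for any two transversals $T,T'$, together with the fact that adjacency in $\Gamma(G)$ depends only on the cosets. With that in hand, the map $\varphi|_{G/Z(G)\setminus\{Z(G)\}}$ does the job, and the converse direction (if needed) follows because the inverse of an isoclinism is again an isoclinism.
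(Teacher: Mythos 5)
Your proposal is correct and is essentially the paper's own argument: the paper chooses representatives $y_i$ of the cosets $\varphi(t_iZ(G))$ to build a transversal of $Z(H)$ and sends $t_i\mapsto y_i$, which is exactly your map $\varphi$ read through the identification of vertices with non-trivial cosets, and the adjacency check via condition (iii) and the injectivity of $\psi$ is the same in both. No further comment is needed.
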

\begin{proof}
By hypothesis, there are isomorphisms $\varphi:G/Z(G)\rightarrow H/Z(H)$ and $\psi:G'\rightarrow H'$ such that  $\psi(\alpha_G(xZ(G),yZ(G)))=\alpha_H(\varphi(xZ(G)),\varphi(yZ(G)))$, for all $x,y\in G$. Let $T(G)=\{1,t_1,\ldots,t_n\}$ be a transversal of $Z(G)$ in $G$, and, for each $i\in\{1,\ldots,n\}$, let $y_i$ be a representative  of the coset $\varphi(t_iZ(G))$. Therefore $T(H)=\{1,y_1,\ldots,y_n\}$ is a transversal of $Z(H)$ in $H$. If we put $T(G)\setminus\{1\}$ as the 
vertex set of ${\cal T}(G)$ and $T(H)\setminus\{1\}$ as the vertex set of ${\cal T}(H)$, the mapping $t_i\longmapsto y_i$ defines a bijection between $V({\cal T}(G))$ and $V({\cal T}(H))$. In addition, if $t_i$ e $t_j$ are adjacent in ${\cal T}(G)$, then $[y_i,y_j]=\psi([t_i,t_j])=1$, that is, $y_i$ and $y_j$ are adjacent in ${\cal T}(H)$. On the other hand, if $[y_i,y_j]=1$, we have $\psi([t_i,t_j])=1$. Since $\psi$ is an isomorphism, we obtain $[t_i,t_j]=1$. Therefore ${\cal T}(G)$ and ${\cal T}(H)$ are isomorphic.
\end{proof}

The next example shows  that the converse of  Proposition
\ref{isoclinismocomutante} does not hold in general.

\begin{exe}\label{contraexemploisoclinismocomutante}
\emph{Consider the following groups :}

\emph{$J:= \langle a,b,c \mid a^4 = b^4 = c^4 = 1, a^2 = b^2, ab = ba, cac^{-1} = a^{-1}, bcb^{-1} = c^{-1} \rangle$}

\emph{$D_{16}:=\langle x,y\mid x^8=y^2=1, \, yxy^{-1}=x^{-1}\rangle$.}

\emph{We have  $Z(J)=\{1,a^2,c^2,a^2c^2\}$ and $Z(D_{16})=\{1,x^4\}$; further $T_1=\{1,a,b,c,ab,bc,ac,abc\}$ is a transversal of $Z(J)$ in $J$ and $T_2=\{1,x,x^2,x^3,y,xy,x^2y,x^3y\}$ is a transversal of $Z(D_{16})$ in $D_{16}$. It is not difficult verify that ${\cal T}(J)\cong {\cal T}(D_{16})\cong K_3\cup I_4$ (Figures \ref{grupoJ} and \ref{D16}); however, $J$ e $D_{16}$ are not isoclinic, because the groups $J/Z(J)$,  $D_{16}/Z(D_{16})$ are isomorphic to $C_2^3$, $D_8$, respectively. }
\end{exe}

\begin{figure}[htb]
\centering
\begin{minipage}[c]{0.5\linewidth}
\centering
\includegraphics[scale=0.45]{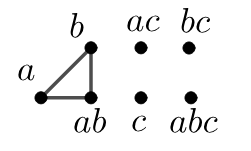}
\caption{${\cal T}(J)$ }\label{grupoJ}
\end{minipage}\hfill
\begin{minipage}[c]{0.5\linewidth}
\centering
\includegraphics[scale=0.45]{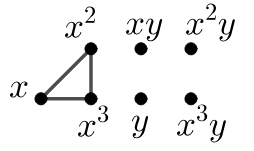}
\caption{${\cal T}(D_{16})$}\label{D16}
\end{minipage}\hfill
\end{figure}

The next result gives the size of the common neighborhood of two vertices of  ${\cal T}(G)$.

\begin{prop}\label{grauX}
Given a finite non-abelian group $G$, let $x$ and $y$ be distinct vertices of ${\cal T}(G)$.  The following statements are true:
\begin{description}
\item[(i)] deg$(x)=[C_G(x):Z(G)] - 2$;
\item[(ii)] if $x$ and $y$ are adjacent, then $|N(x)\cap N(y)|=[C_G(x)\cap C_G(y):Z(G)]-3$;
\item[(iii)] if $x$ and $y$ are not adjacent, then $|N(x)\cap N(y)|=[C_G(x)\cap C_G(y):Z(G)]-1$.
\end{description} 
\end{prop}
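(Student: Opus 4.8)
The plan is to compute each quantity directly in terms of centralizers, using the correspondence between vertices of ${\cal T}(G)$ and nontrivial cosets of $Z(G)$. The starting observation is that if $x$ is a vertex of ${\cal T}(G)$ (so $x\notin Z(G)$), then a vertex $z\neq x$ is adjacent to $x$ precisely when $[x,z]=1$, i.e.\ when $z\in C_G(x)$. Now $C_G(x)$ is a subgroup of $G$ containing $Z(G)$, so the cosets of $Z(G)$ contained in $C_G(x)$ are exactly those whose representatives lie in $C_G(x)$; there are $[C_G(x):Z(G)]$ of them. One of these is the trivial coset $Z(G)$, which contributes no vertex to ${\cal T}(G)$, and another is the coset $xZ(G)$, which is the vertex $x$ itself and hence excluded from $N(x)$. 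Every other such coset gives exactly one vertex of ${\cal T}(G)$ adjacent to $x$. This yields $\deg(x)=[C_G(x):Z(G)]-2$, proving (i).

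For (ii) and (iii) the key point is that a vertex $z$ lies in $N(x)\cap N(y)$ if and only if $z\neq x$, $z\neq y$, and $z$ commutes with both $x$ and $y$, i.e.\ $z\in C_G(x)\cap C_G(y)$. Again $C_G(x)\cap C_G(y)$ is a subgroup containing $Z(G)$, so it is a union of $[C_G(x)\cap C_G(y):Z(G)]$ cosets of $Z(G)$, each giving at most one vertex of ${\cal T}(G)$. From this count I must remove the cosets that do not contribute a valid common neighbour: the trivial coset $Z(G)$ (always), the coset $xZ(G)$ (always, since $x$ commutes with itself, so $xZ(G)\subseteq C_G(x)\cap C_G(y)$ precisely when $x\in C_G(y)$, i.e.\ when $x,y$ are adjacent), and the coset $yZ(G)$ (similarly, present exactly when $x,y$ are adjacent). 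So the case split is forced by whether $x$ and $y$ are adjacent. If $x$ and $y$ are adjacent, then $x\in C_G(y)$ and $y\in C_G(x)$, hence both $xZ(G)$ and $yZ(G)$ are among the cosets of $C_G(x)\cap C_G(y)$, and we subtract $3$ (the trivial coset plus these two), giving $|N(x)\cap N(y)|=[C_G(x)\cap C_G(y):Z(G)]-3$, which is (ii). If $x$ and $y$ are not adjacent, then neither $xZ(G)$ nor $yZ(G)$ is contained in $C_G(x)\cap C_G(y)$ (since $y\notin C_G(x)$, the coset $xZ(G)$ is disjoint from $C_G(y)$, and symmetrically), so we subtract only $1$ for the trivial coset, giving $|N(x)\cap N(y)|=[C_G(x)\cap C_G(y):Z(G)]-1$, which is (iii).

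The only mild subtlety, which I would state carefully, is the bijection between cosets of $Z(G)$ inside a subgroup $K$ with $Z(G)\leq K\leq G$ and vertices of ${\cal T}(G)$: each nontrivial such coset meets the fixed transversal $T$ in exactly one element, and that element is a vertex of ${\cal T}(G)$ because it lies outside $Z(G)$; conversely distinct cosets give distinct vertices. Since Proposition~\ref{isoclinismocomutante} and the earlier remarks already establish that ${\cal T}(G)$ is independent of the choice of transversal, there is no loss in fixing $T$ once and for all for this argument. I do not anticipate a genuine obstacle here; the proof is a bookkeeping exercise in counting cosets, and the main thing to get right is the $-2$ versus $-3$ versus $-1$ accounting, i.e.\ being precise about exactly which cosets ($Z(G)$, $xZ(G)$, $yZ(G)$) do or do not lie inside the relevant centralizer and whether they correspond to excluded vertices.
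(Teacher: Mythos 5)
Your proof is correct and is essentially the same argument as the paper's: both decompose $C_G(x)$ (respectively $C_G(x)\cap C_G(y)$) as a union of cosets of $Z(G)$, with one coset per neighbour plus the excluded cosets $Z(G)$, $xZ(G)$, and (in the adjacent case) $yZ(G)$, which is exactly where the $-2$, $-3$, $-1$ come from. No substantive difference in method.
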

\begin{proof}
Let $x$ be a vertex of ${\cal T}(G)$. If $N(x)=\{x_1,\ldots,x_n\}$ is the neighborhood of $x$ in ${\cal T}(G)$ and if $\mathcal{A}:=\bigcup_{i=1}^n x_iZ(G)$, it is easy to verify that $C_G(x)=Z(G)\cup xZ(G)\cup \mathcal{A}$, so that  deg$(x)=n=[C_G(x):Z(G)] - 2$. This proves (i).

Let $x$ and $y$ be distinct vertices of ${\cal T}(G)$ and put $N=N(x)\cap N(y)$ and $\mathcal{B}:=\bigcup_{a\in N} aZ(G)$. We note that $\{x,y\}\cap  N=\emptyset$. If $x$ and $y$ are adjacent in ${\cal T}(G)$, then $\{x,y\}\subseteq C_G(x)\cap C_G(y)$ and, in this case, we obtain $C_G(x)\cap C_G(y)=Z(G)\cup xZ(G)\cup yZ(G)\cup \mathcal{B}$. It follows that $|N|=[C_G(x)\cap C_G(y):Z(G)]-3$, which proves (ii). If $x$ and $y$ are not adjacent in ${\cal T}(G)$, then $x\notin C_G(y)$ and $y\notin C_G(x)$ and it is easy to see that $C_G(x)\cap C_G(y)=Z(G)\cup \mathcal{B}$. Therefore $|N|=[C_G(x)\cap C_G(y):Z(G)]-1$ and  part (iii) is proved. 
\end{proof}

From Proposition \ref{grauX} it follows that  ${\cal T}(G)$ is regular if and only if  $|C_G(x)|=|C_G(y)|$, for all $x,y \in G \setminus Z(G)$, that is,  $|cs(G)|=2$. Ito \cite{ito} gave a description of the finite groups of conjugate type $\{1,m\}$.

\begin{teo} \label{scGgammaG} \emph{(\cite[Theorem 1]{ito})}
If $G$ is a finite  group of conjugate type $\{1,m\}$, then $m$ is a power of a prime $p$  and $G$ admits a decomposition $G=A\times P$, where $A$ is an abelian subgroup of $G$  and $P$ is a Sylow $p$-subgroup  of the same conjugate type as $G$. 
\end{teo}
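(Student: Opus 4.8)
This is a classical theorem of Itô, so the plan is to reconstruct his argument. It has two parts: first, showing that $m$ is a prime power, and then, knowing $m=p^{a}$, obtaining the decomposition $G=A\times P$.

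For the first part I would start from a single observation on Sylow subgroups: if $r$ is a prime and $R\in\mathrm{Syl}_r(G)$ has an element $z\in Z(R)\setminus Z(G)$, then $R\le C_G(z)$ and hence $r\nmid[G:C_G(z)]=m$. So for every prime $r\mid m$ we get $Z(R)=Z(G)\cap R$; in particular $R$ is non-abelian, since otherwise $R=Z(R)\le Z(G)$ and $r\nmid m$. Moreover, for each $x\in R\setminus Z(G)=R\setminus Z(R)$ one compares $r$-parts: $[R:C_R(x)]$ is an $r$-power dividing $[G:C_G(x)]=m$, so it is at most the $r$-part of $m$, while $C_R(x)=R\cap C_G(x)$ is an $r$-subgroup of $C_G(x)$, so $|C_R(x)|\le|C_G(x)|_r$ and $[R:C_R(x)]\ge|R|/|C_G(x)|_r=$ the $r$-part of $m$. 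Hence every non-central element of $R$ has $R$-class size exactly the $r$-part of $m$; that is, each Sylow subgroup attached to a prime $r\mid m$ is a non-abelian $r$-group of conjugate type $\{1,r^{a_r}\}$, where $r^{a_r}$ is the $r$-part of $m$.

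The hard step, and the one I expect to be the main obstacle, is ruling out the possibility that two distinct primes $p,q$ divide $m$. The key auxiliary fact is: if $x,y\notin Z(G)$ commute and have coprime orders, then $\langle x,y\rangle=\langle xy\rangle$, so $C_G(xy)=C_G(x)\cap C_G(y)$ and $xy\notin Z(G)$; comparing indices ($[G:C_G(x)]=[G:C_G(y)]=[G:C_G(xy)]=m$ while $C_G(x)\cap C_G(y)\le C_G(x)$) forces $C_G(x)=C_G(y)$. Assuming $p,q\mid m$ and taking $x\in P\setminus Z(G)$, one has $|C_G(x)|_q=|Q|/q^{b}>1$ (since $Z(Q)\neq1$ forces $q^{b}<|Q|$, where $q^{b}$ is the $q$-part of $m$), so $C_G(x)$ contains a non-trivial $q$-element; after replacing it by a non-central one — treating separately the case where the pertinent Sylow $q$-subgroup of $C_G(x)$ lies in $Z(G)$ — the lemma identifies $C_G(x)$ with a $q$-centralizer. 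Combining this with a Frobenius-type count (the number of $r$-elements of $G$ is divisible by $|G|_r$, while the non-central ones split into classes of size $m$, so the $r$-part of $m$ divides $|Z(R)|$ for $r\in\{p,q\}$) should yield a contradiction; this bookkeeping is the delicate part of the argument.

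Once $m=p^{a}$, every conjugacy class of $G$ has $p$-power size (either $1$ or $p^{a}$), and I would invoke the classical consequence that such a group is $p$-decomposable: $G=A\times P$ with $P\in\mathrm{Syl}_p(G)$ and $A$ a central $p'$-subgroup, hence abelian. (Alternatively, this follows by induction on $|G|$ after factoring out a central subgroup of prime order, noting that the quotient again has all class sizes a power of $p$.) Finally, since $A\le Z(G)$ we get $Z(G)=A\times Z(P)$, so every $x\in P\setminus Z(P)$ is non-central in $G$ with $|x^{G}|=|x^{P}|$; by the first part this common value is $m$, so $P$ has conjugate type $\{1,m\}$, completing the proof.
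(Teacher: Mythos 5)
The paper does not prove this statement at all: it is quoted as Theorem~1 of It\^o's 1953 paper and used as a black box, so there is no internal proof to compare yours against. Judged on its own, your reconstruction has a genuine gap exactly where you flag it: the proof that $m$ cannot be divisible by two distinct primes is never carried out. You correctly set up the coprime-commutation lemma (commuting non-central $x,y$ of coprime orders satisfy $C_G(x)=C_G(xy)=C_G(y)$), you correctly observe that for a non-central $p$-element $x$ the centralizer $C_G(x)$ must contain a non-central $q$-element (since $|C_G(x)|_q=|Q|/m_q>|Z(Q)|$, using only the lower bound $m_q\le [Q:C_Q(y)]<[Q:Z(Q)]$), and the Frobenius count does give $m_r\mid |Z(G)|_r$; but none of this is assembled into a contradiction --- you end with ``should yield a contradiction; this bookkeeping is the delicate part of the argument.'' That bookkeeping \emph{is} the theorem: It\^o's argument at this point is a genuinely intricate counting argument of several pages, and without it the first (and hardest) assertion of the statement is unproved.

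A secondary error: your upper bound $[R:C_R(x)]\le m_r$ rests on the claim that $[R:C_R(x)]$ divides $[G:C_G(x)]$ for $x\in R\in\mathrm{Syl}_r(G)$. This is false in general: in $S_4$ with $x=(12)(34)$ and $R$ a Sylow $2$-subgroup different from $C_G(x)\cong D_8$, one gets $C_R(x)=R\cap C_G(x)=V_4$, so $[R:C_R(x)]=2$ while $[G:C_G(x)]=3$. Only your lower bound $[R:C_R(x)]\ge m_r$ is justified; equality is obtained by extending a Sylow $r$-subgroup of $C_G(x)$ (which contains $x$, as $x\in Z(C_G(x))$) to a Sylow $r$-subgroup of $G$, which proves the claim for \emph{some} Sylow subgroup through $x$, not for an arbitrarily chosen one. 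This particular assertion happens not to be load-bearing for the rest of your sketch, but it is stated as proved. By contrast, the second half --- that once $m=p^a$ every class size is a $p$-power, hence $G=A\times P$ with $A$ central and $p'$, and that $P$ then inherits conjugate type $\{1,m\}$ --- is handled correctly, either by citing the classical $p$-decomposability result or by your induction modulo a central subgroup of prime order.
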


Let $p$ be a prime. 
 Recall that a finite non-abelian $p$-group is extraspecial if $G^{\prime} =Z(G)=\Phi(G)$ and $|Z(G)|=p$, where $\Phi(G)$ denotes the Frattini subgroup of $G$. It is well known that any non-abelian group of  order $p^3$ is extraspecial. Further, an extraspecial  $p$-group has order $p^{2n+1}$, for some positive integer $n$.

In this paper we will use the following  Ishikawa's classification: 

\begin{teo}\label{ishik1}
\emph{(\cite[Proposition 3.1]{ishikawa})} Let $G$ be a finite $p$-group, $p$ prime. Then $G$ is of conjugate type $\{1,p\}$ if and only if $G$ is isoclinic to an extraspecial $p$-group.
\end{teo}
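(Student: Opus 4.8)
The plan is to reduce everything to the commutator map $\alpha_G\colon G/Z(G)\times G/Z(G)\to G'$, since both the property ``isoclinic to an extraspecial $p$-group'' and the property ``conjugate type $\{1,p\}$'' are statements about this map. For the implication coming from isoclinism, I would first check directly that an extraspecial group $E$ of order $p^{2n+1}$ is of conjugate type $\{1,p\}$: here $E/Z(E)$ is an $\mathbb{F}_p$-space of dimension $2n$, the map $\alpha_E$ is a nondegenerate alternating form with values in $E'=Z(E)\cong\mathbb{F}_p$, and for non-central $x$ the centralizer satisfies $C_E(x)/Z(E)=\bar x^{\perp}$, a hyperplane, so $[E:C_E(x)]=p$. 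I would then note, by the same computation used in the proof of Proposition \ref{isoclinismocomutante}, that an isoclinism $(\varphi,\psi)$ carries $C_G(x)/Z(G)$ isomorphically onto $C_H(x')/Z(H)$ whenever $\varphi(\bar x)=\overline{x'}$; hence $[G:C_G(x)]=[H:C_H(x')]$, so the conjugate type is an isoclinism invariant and any $G$ isoclinic to such an $E$ is of conjugate type $\{1,p\}$.

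For the converse, let $G$ be a $p$-group of conjugate type $\{1,p\}$. The first step is to show $G$ has nilpotency class $2$. For non-central $x$ the centralizer $C_G(x)$ has index $p$, so it is a maximal, hence normal, subgroup with $G/C_G(x)$ cyclic; therefore $G'\le C_G(x)$. As this holds for every non-central $x$, each element of $G'$ commutes with every element of $G$, that is, $G'\le Z(G)$. Consequently $\alpha_G$ is a biadditive, alternating map on $V:=G/Z(G)$ with values in $G'$, and it is nondegenerate because $\alpha_G(\bar x,-)=1$ forces $x\in Z(G)$. For non-central $x$ the homomorphism $\alpha_G(\bar x,-)\colon V\to G'$ has kernel $C_G(x)/Z(G)$ of index $p$, so its image $K_x$ is cyclic of order $p$. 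Since $G'$ is abelian and generated by the subgroups $K_x$, it is elementary abelian; and since then $\alpha_G(\bar x^{\,p},-)=\alpha_G(\bar x,-)^{p}=1$, nondegeneracy forces $\bar x^{\,p}=1$, so $V$ is elementary abelian as well. Thus $V$ and $G'$ are $\mathbb{F}_p$-vector spaces and $\alpha_G$ is a nondegenerate alternating bilinear form, every slice $\alpha_G(\bar x,-)$ of which has rank one.

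The heart of the proof, and the step I expect to be the main obstacle, is to deduce $\dim_{\mathbb{F}_p}G'=1$ from this rank-one condition. I would consider the injective linear map $\beta\colon V\to\mathrm{Hom}(V,G')$, $\bar x\mapsto\alpha_G(\bar x,-)$, whose image is a subspace all of whose nonzero elements have rank one. By the classification of such spaces of homomorphisms, either all of them have a common one-dimensional image inside $G'$, or they all vanish on a common hyperplane of $V$. The latter alternative would place a hyperplane of $V$ in the radical of $\alpha_G$, which is impossible because $\dim V\ge 2$ (otherwise $G/Z(G)$ would be cyclic and $G$ abelian). Hence the former holds, and $G'=\langle\mathrm{im}\,\alpha_G\rangle$ is one-dimensional, so $|G'|=p$. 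Establishing this dichotomy cleanly over $\mathbb{F}_p$ — or, equivalently, arguing directly that the rank-one slices of a nondegenerate alternating form must share a common image line — is the technical crux of the whole theorem.

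With $|G'|=p$, the space $V=G/Z(G)$ elementary abelian, and $\alpha_G$ nondegenerate alternating, the dimension of $V$ is even, say $2n$ with $n\ge 1$, and a symplectic basis identifies $\alpha_G$ (after the scalar identification $G'\cong\mathbb{F}_p$) with the standard symplectic form. Any extraspecial group $H$ of order $p^{2n+1}$ carries exactly the same data on $H/Z(H)$ and $H'$, so matching symplectic bases yields isomorphisms $\varphi\colon G/Z(G)\to H/Z(H)$ and $\psi\colon G'\to H'$ with $\psi\circ\alpha_G=\alpha_H\circ(\varphi\times\varphi)$; that is, $(\varphi,\psi)$ is an isoclinism from $G$ to $H$. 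Combined with the first paragraph this gives the stated equivalence. I remark that Ito's Theorem \ref{scGgammaG} is not needed for this $p$-group statement; it is what later upgrades the conclusion to a global decomposition.
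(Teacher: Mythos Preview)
The paper does not prove this theorem at all: it is quoted verbatim from Ishikawa \cite[Proposition~3.1]{ishikawa} and used as a black box throughout. So there is no ``paper's own proof'' to compare against; your proposal is a self-contained argument for a result the authors simply import.

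On its own merits, your outline is correct and follows the standard route. The only point you flag as a potential obstacle --- the dichotomy for a subspace $U\subseteq\mathrm{Hom}(V,G')$ consisting of rank-one maps --- is in fact quite short to establish, and you should just include the argument rather than appeal to an unnamed classification. If $f,g\in U$ are nonzero with distinct one-dimensional images $\langle w_1\rangle\neq\langle w_2\rangle$, write $f(v)=\phi(v)w_1$ and $g(v)=\psi(v)w_2$ for linear functionals $\phi,\psi$; were $\ker\phi\neq\ker\psi$, one could pick $v_1\in\ker\psi\setminus\ker\phi$ and $v_2\in\ker\phi\setminus\ker\psi$, whence $(f+g)(v_1)$ and $(f+g)(v_2)$ would be independent and $f+g$ would have rank $2$. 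So distinct images force equal kernels. A short case check then globalizes this: if some pair in $U$ has distinct images, every nonzero element of $U$ shares that common kernel. Your nondegeneracy argument then rules out the common-kernel alternative, giving $|G'|=p$ as claimed. With that gap closed, the rest of your proof (class~$2$, elementary abelian $V$ and $G'$, symplectic identification, and the isoclinism-invariance of conjugate type) is complete and correct.
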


\section{Main results}

In this section we will present the proof of  Theorem A,
which will be done in steps. First
let us show that the graph ${\cal T}(G)$ associated to an extraspecial $2$-group of order $2^{2n+1}$, $n\geq 2$, is  connected strongly regular. For it, we will need the following two lemmas:

\begin{lema}\label{lemaextraespecial}
Let $G$ be an extraspecial $2$-group. If $x,y\in G$ and $x\notin yZ(G)$, then $C_G(x)\neq C_G(y)$.
\end{lema}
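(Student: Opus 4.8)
The plan is to argue by contradiction: suppose $x,y\in G$ with $x\notin yZ(G)$ but $C_G(x)=C_G(y)$; I will produce an element of $G$ whose centralizer is strictly smaller than this common centralizer, contradicting the fact that in an extraspecial $2$-group every non-central element has centralizer of index exactly $2$ (equivalently, $G$ is of conjugate type $\{1,2\}$, so $|C_G(g)|$ is the same for all $g\in G\setminus Z(G)$). First I would dispose of the trivial cases: if one of $x,y$ lies in $Z(G)$, then its centralizer is all of $G$, which cannot equal the centralizer of a non-central element; so we may assume $x,y\in G\setminus Z(G)$, and hence $C:=C_G(x)=C_G(y)$ is a subgroup of index $2$ in $G$.

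The key observation is that since $G$ is extraspecial of exponent dividing $4$ with $G'=Z(G)$ of order $2$, commutators behave bilinearly: writing $z$ for the generator of $Z(G)$, for any $a,b\in G$ we have $[a,b]\in\{1,z\}$, and the map $(aZ(G),bZ(G))\mapsto [a,b]$ is a (nondegenerate, alternating) bilinear form on the $\mathbb{F}_2$-vector space $V=G/Z(G)$. Now $x\notin yZ(G)$ means $\bar x\neq \bar y$ in $V$, and since $\bar x,\bar y\neq 0$ the vectors $\bar x,\bar y$ are linearly independent over $\mathbb{F}_2$; consider the element $w=xy$, so $\bar w=\bar x+\bar y$ is distinct from both $\bar x$ and $\bar y$ and is nonzero, hence $w\in G\setminus Z(G)$. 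An element $g\in G$ centralizes $w=xy$ iff the form pairs $\bar g$ trivially against $\bar x+\bar y$, i.e. iff $[g,x][g,y]=1$ (using bilinearity and the fact that $[g,x]$, $[g,y]$ are central); thus $C_G(w)=C_G(xy)\supseteq C_G(x)\cap C_G(y)=C$. But $C$ already has index $2$, so $C_G(w)\in\{C,G\}$; as $w$ is non-central, $C_G(w)=C$ as well. This shows $\bar x,\bar y,\bar x+\bar y$ all lie in the radical-like set of vectors perpendicular (under the form) to $C/Z(G)$ — more precisely, $C/Z(G)=\bar x^{\perp}=\bar y^{\perp}$, so $\bar x^{\perp}=\bar y^{\perp}$, and since the form is nondegenerate this forces $\bar x$ and $\bar y$ to be proportional, hence equal in $V$ (scalars being only $1$ over $\mathbb{F}_2$), contradicting $x\notin yZ(G)$.

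So the real content is just the nondegeneracy of the commutator form on $G/Z(G)$ for an extraspecial group, together with the index-$2$ centralizer fact; the argument then reduces to the linear-algebra statement that distinct nonzero vectors of $\mathbb{F}_2^{2n}$ cannot have the same hyperplane as perpendicular space. The main obstacle, such as it is, is simply setting up the bilinear-form language cleanly — verifying that $[g,x][g,y]=[g,xy]$ modulo the fact that all commutators are central, and that $C_G(g)$ has index $2$ for every non-central $g$ (which is immediate since $G$ has conjugate type $\{1,2\}$, or directly since $C/Z(G)=\bar g^{\perp}$ has codimension $1$ by nondegeneracy). Everything after that is routine. Alternatively, one can avoid the form language entirely: pick $t\in G$ with $[t,x]=z\neq 1$ (possible since $x\notin Z(G)$); if also $C_G(x)=C_G(y)$ then $t\notin C_G(y)$ so $[t,y]=z$, whence $[t,xy]=[t,x][t,y]=z^2=1$ and $t$ centralizes $xy$; iterating over all such $t$ shows $C_G(xy)$ contains $C_G(x)$ properly unless $\bar x=\bar y$, and counting orders gives the contradiction. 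I would present whichever version is shorter in context, but the bilinear-form phrasing is the conceptually cleanest.
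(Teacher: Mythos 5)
Your proof is correct, but it takes a genuinely different route from the paper. The paper argues by induction on $|G|$: for $|G|>8$ it writes $G$ as a central product $AB$ of two smaller extraspecial $2$-groups (citing \cite[5.3.8]{robinson}), checks the base cases $D_8$ and $Q_8$ by hand, and transports a witness element from $C_A(a_1)\,\triangle\, C_A(a_2)$ or $C_B(b_1)\,\triangle\, C_B(b_2)$ up to $G$. You instead use the standard nondegenerate alternating $\mathbb{F}_2$-bilinear form on $V=G/Z(G)$ induced by the commutator, observe that $C_G(x)/Z(G)=\bar x^{\perp}$, and reduce the lemma to the linear-algebra fact that two nonzero vectors with the same perpendicular hyperplane must be proportional, hence equal over $\mathbb{F}_2$. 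Your argument is shorter once the form is set up, avoids the structural central-product decomposition, and generalizes verbatim to extraspecial $p$-groups for odd $p$ (with $\mathbb{F}_p$ in place of $\mathbb{F}_2$); the paper's induction is more elementary in that it needs no bilinear-form machinery, only the decomposition theorem. One remark: your detour through $w=xy$ and $C_G(xy)$ is not needed for the conclusion --- the identity $\bar x^{\perp}=C/Z(G)=\bar y^{\perp}$ together with nondegeneracy already finishes the proof --- so you could cut that paragraph; and your handling of the degenerate case where one of $x,y$ is central is a small point the paper leaves implicit but which your version makes explicit.
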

\begin{proof} 
If $G=D_8$ or $G$ is the quaternion group of order $8$, the result is clearly true. Consider  an extraspecial $2$-group $G$ of order $m$ and suppose that the result is true for all extraspecial $2$-group of order less than $m$. By \cite[5.3.8]{robinson}, $G$ contains subgroups $A$ and $B$ such that $A$ and $B$ are extraspecial $2$-groups of order less than $m$ and $G=AB$ is a central product. Hence,   the elements of $A$ commute with the elements of $B$ and $A\cap B=Z(G)=Z(A)=Z(B)$. By induction hypothesis, the following two conditions are satisfied:

(i) $C_A(a_1)\neq C_A(a_2)$, for all $a_1,a_2\in A$ such that $a_1\notin a_2Z(A)$.

(ii) $C_B(b_1)\neq C_B(b_2)$, for all $b_1,b_2\in B$ such that $b_1\notin b_2Z(B)$.

Let $x=a_1b_1$ and $y=a_2b_2$ be non-central elements of $G$ such that $x\notin yZ(G)$, where $a_1,a_2\in A$ and $b_1,b_2\in B$. Thus, in virtue of (i) and (ii), we must have $C_A(a_1)\neq C_A(a_2)$ or $C_B(b_1)\neq C_B(b_2)$. In the first situation, there is $w\in C_A(a_1)\setminus C_A(a_2)$ (or there is $u\in C_A(a_2)\setminus C_A(a_1)$) and it follows that $w\in C_G(x)\setminus C_G(y)$ (or $u\in C_G(y)\setminus C_G(x)$), which gives us $C_G(x)\neq C_G(y)$. Analogously, $C_B(b_1)\neq C_B(b_2)$ implies $C_G(x)\neq C_G(y)$, as required.
\end{proof}

 The following result is taken from \cite{godsil}.

\begin{lema}\label{completodesconexo}\emph{ (\cite[Lemma 10.1.1]{godsil}
)}If ${\cal G}$ is a strongly regular graph with parameters $(v,k,\lambda, \mu)$, then the following statements are equivalent:
\begin{description}
\item[(i)] ${\cal G}$ is disconnected;
\item[(ii)] $\mu=0$;
\item[(iii)] $\lambda= k-1$;
\item[(iv)] ${\cal G}$ is isomorphic to $mK_{k+1}$ for some integer $m>1$.
\end{description} 
\end{lema}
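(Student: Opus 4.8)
The plan is to establish the cycle of implications $(iv)\Rightarrow(i)\Rightarrow(ii)\Rightarrow(iv)$, together with $(iv)\Leftrightarrow(iii)$; this yields the full equivalence. The implication $(iv)\Rightarrow(i)$ is immediate, since a disjoint union of $m>1$ complete graphs is disconnected. For $(i)\Rightarrow(ii)$, I would pick two vertices $x,y$ lying in distinct connected components of $\mathcal{G}$. They are non-adjacent, and any common neighbour would connect the two components; hence they have no common neighbour, forcing $\mu=0$ (recall $\mu$ does not depend on the chosen non-adjacent pair).

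The heart of the argument is $(ii)\Rightarrow(iv)$. Assuming $\mu=0$, I would show that the relation ``$x=y$ or $x$ and $y$ are adjacent'' is an equivalence relation on $V(\mathcal{G})$. Reflexivity and symmetry are clear; for transitivity, suppose $x$ and $y$ are adjacent and let $z\in N(x)$ with $z\neq y$. If $z$ and $y$ were non-adjacent, then $x$ would be a common neighbour of the non-adjacent pair $\{z,y\}$, contradicting $\mu=0$; hence $z$ and $y$ are adjacent. This gives $N(x)\setminus\{y\}\subseteq N(y)$, so the closed neighbourhoods $N(x)\cup\{x\}$ and $N(y)\cup\{y\}$, each of cardinality $k+1$ by $k$-regularity, must coincide. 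Consequently every equivalence class induces a clique $K_{k+1}$, there are no edges between distinct classes, and since the classes partition the $v$ vertices into blocks of size $k+1$ we get $v=m(k+1)$ with $m\in\nat$ and $\mathcal{G}\cong mK_{k+1}$; the hypothesis $k<v-1$ gives $k+1<v$, hence $m>1$.

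For $(iv)\Rightarrow(iii)$ it suffices to compute in $mK_{k+1}$: two adjacent vertices lie in a common copy of $K_{k+1}$, and their common neighbours are precisely the remaining $k-1$ vertices of that copy, so $\lambda=k-1$. Finally, $(iii)\Rightarrow(iv)$ runs exactly as $(ii)\Rightarrow(iv)$: if $x$ and $y$ are adjacent, then their $\lambda=k-1$ common neighbours together with $y$ already account for all $k$ elements of $N(x)$, so again $N(x)\cup\{x\}=N(y)\cup\{y\}$, and the same equivalence-relation argument produces $\mathcal{G}\cong mK_{k+1}$ with $m>1$. I do not expect a serious obstacle here; the only points needing care are checking that the closed-neighbourhood relation is genuinely transitive, and using $k$-regularity at the right moment to upgrade the inclusion $N(x)\cup\{x\}\subseteq N(y)\cup\{y\}$ to an equality.
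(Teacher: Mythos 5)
Your proof is correct and complete. The paper itself gives no argument for this lemma --- it is quoted verbatim from Godsil--Royle \cite[Lemma 10.1.1]{godsil} --- and your chain of implications, with the key step that $\mu=0$ (or $\lambda=k-1$) makes ``equal or adjacent'' an equivalence relation whose classes are the closed neighbourhoods of size $k+1$, is exactly the standard textbook argument; the hypotheses $0<k<v-1$ are used correctly to guarantee that $\lambda$ and $\mu$ are defined and that $m>1$.
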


\begin{prop}\label{extraespecialGFR}
If $G$ is an extraspecial $2$-group of order $2^{2n+1}$, with $n\geq 2$, then ${\cal T}(G)$ is a connected strongly regular graph with parameters $(2^{2n} - 1, 2^{2n-1} -2, 2^{2n-2}-3, 2^{2n-2}-1)$.
\end{prop}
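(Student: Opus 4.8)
The plan is to compute the four parameters of ${\cal T}(G)$ directly from Proposition \ref{grauX}, using that an extraspecial $2$-group $G$ of order $2^{2n+1}$ has $|Z(G)|=2$, is of conjugate type $\{1,2\}$, and therefore $[C_G(x):Z(G)]=2^{2n-1}$ for every non-central $x$. First I would note that $|V({\cal T}(G))| = [G:Z(G)] - 1 = 2^{2n}-1 =: v$, and that by Proposition \ref{grauX}(i) every vertex has degree $[C_G(x):Z(G)]-2 = 2^{2n-1}-2 =: k$; since $n\geq 2$ one checks $0<k<v-1$, so the notion of strong regularity applies. The core computation is the common-neighbourhood count, which by Proposition \ref{grauX}(ii),(iii) reduces to determining $[C_G(x)\cap C_G(y):Z(G)]$ for distinct non-central $x,y$.

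The key step is therefore to show that for any two distinct vertices $x,y$ of ${\cal T}(G)$ we have $|C_G(x)\cap C_G(y)| = 2^{2n-1}$, i.e. $[C_G(x)\cap C_G(y):Z(G)] = 2^{2n-2}$, regardless of whether $x$ and $y$ commute. Here Lemma \ref{lemaextraespecial} is the crucial input: $C_G(x)\ne C_G(y)$ (as $x\notin yZ(G)$), and since both centralizers have index $2$ in $G$ (equivalently order $2^{2n}$), they are distinct maximal subgroups, so $C_G(x)C_G(y)=G$ and hence $|C_G(x)\cap C_G(y)| = |C_G(x)||C_G(y)|/|G| = 2^{2n}\cdot 2^{2n}/2^{2n+1} = 2^{2n-1}$. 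Feeding $[C_G(x)\cap C_G(y):Z(G)] = 2^{2n-2}$ into Proposition \ref{grauX} gives $\lambda = 2^{2n-2}-3$ for adjacent vertices and $\mu = 2^{2n-2}-1$ for non-adjacent vertices. Thus ${\cal T}(G)$ is strongly regular with the asserted parameters $(2^{2n}-1,\,2^{2n-1}-2,\,2^{2n-2}-3,\,2^{2n-2}-1)$, provided we also exhibit at least one non-adjacent pair (so that $\mu$ is genuinely realised and the graph is not complete); this is immediate since $G$ is non-abelian, so some pair of non-central transversal elements fails to commute.

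Finally, connectedness follows from Lemma \ref{completodesconexo}: the graph is disconnected iff $\mu = 0$, but $\mu = 2^{2n-2}-1 \geq 2^2 - 1 = 3 > 0$ for $n\geq 2$, so ${\cal T}(G)$ is connected. (Equivalently, one could observe $\lambda = k-1$ would force $2^{2n-2}-3 = 2^{2n-1}-3$, which is false.) I expect the only real obstacle to be the clean application of Lemma \ref{lemaextraespecial} together with the index-$2$ argument to pin down $|C_G(x)\cap C_G(y)|$ uniformly over all pairs; everything else is bookkeeping with the formulas of Proposition \ref{grauX} and the hypothesis $n\geq 2$ to keep the parameters in the valid range. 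One should also double-check the edge case that $x$ and $y$ might lie in a common centralizer structure forcing $C_G(x)\cap C_G(y)$ larger — but Lemma \ref{lemaextraespecial} precisely rules out $C_G(x)=C_G(y)$, and index $2$ does the rest.
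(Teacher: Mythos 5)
Your proposal is correct and follows the same overall skeleton as the paper: Lemma \ref{lemaextraespecial} plus the conjugate type $\{1,2\}$ fact to control $C_G(x)\cap C_G(y)$, then Proposition \ref{grauX} for the parameters and Lemma \ref{completodesconexo} for connectedness. The one place you diverge is the central computation $[C_G(x):C_G(x)\cap C_G(y)]=2$: you obtain it from the product formula, noting that $C_G(x)$ and $C_G(y)$ are distinct subgroups of index $2$, so $C_G(x)C_G(y)=G$ and $|C_G(x)\cap C_G(y)|=|C_G(x)||C_G(y)|/|G|=2^{2n-1}$. The paper instead argues at the level of elements, writing $G'=\{1,d\}$ and checking that for $g,h\in C_G(x)\setminus C_G(y)$ one has $gh^{-1}\in C_G(y)$, which shows directly that $C_G(x)\setminus C_G(y)$ is a single coset of the intersection. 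Your version is slightly slicker and uses only the index-$2$ structure (distinct index-$2$ subgroups intersect in an index-$4$ subgroup, in any finite group); the paper's version is more self-contained in that it exploits $|G'|=2$ explicitly and would generalize to settings where the centralizers are not normal. Both are complete; your remark that one should exhibit a non-adjacent pair (so $\mu$ is realised) is a nicety the paper leaves implicit.
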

\begin{proof}
Let $x,y\in G\setminus Z(G)$ be such that $x\notin yZ(G)$. By Lemma \ref{lemaextraespecial}, we have $C_G(x)\neq C_G(y)$ and, by Theorem \ref{ishik1}, $G$ is of conjugate type $\{1,2\}$. This shows that $[C_G(x):C_G(x)\cap C_G(y)]\geq 2$. Write $G'=\{1,d\}$. Given $g,h\in C_G(x)\setminus C_G(y)$, we have $gy=ygd$,  $h^{-1}y=yh^{-1}d$ and so $(gh^{-1})y=y(gh^{-1})$, that is, $gh^{-1}\in C_G(x)\cap C_G(y)$. Hence $[C_G(x):C_G(x)\cap C_G(y)]= 2$. Applying Proposition \ref{grauX} to calculate the degree of each vertex of ${\cal T}(G)$ and the size of the common neighborhood of two vertices, we conclude that ${\cal T}(G)$ is   strongly regular with parameters $(2^{2n} - 1, 2^{2n-1} -2, 2^{2n-2}-3, 2^{2n-2}-1)$. Since $2^{2n-2}-1 \neq 0$, Lemma \ref{completodesconexo} give us ${\cal T}(G)$ is connected.
\end{proof}

The next result establishes  a necessary condition for a graph ${\cal T}(G)$   to be connected strongly regular.

\begin{lema}\label{condicaonecessariaconexo}
Let $G$ be a finite non-abelian group such that ${\cal T}(G)$ is  connected strongly regular. Then, for all $x,y\in G\setminus Z(G)$ such that $x\notin yZ(G)$, we have $C_G(x)\neq C_G(y)$.
\end{lema}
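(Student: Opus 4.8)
The statement is a short consequence of Proposition~\ref{grauX} together with Lemma~\ref{completodesconexo}, and I would prove it by contradiction. Suppose there exist $x,y\in G\setminus Z(G)$ with $x\notin yZ(G)$ but $C_G(x)=C_G(y)$. Since $C_G(g)$ depends only on the coset $gZ(G)$ (indeed $C_G(gz)=C_G(g)$ for every $z\in Z(G)$), I may replace $x$ and $y$ by the transversal representatives of their cosets, so that $x$ and $y$ become two \emph{distinct} vertices of ${\cal T}(G)$ (distinct precisely because $x\notin yZ(G)$).

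First I would observe that $x$ and $y$ are \emph{adjacent} in ${\cal T}(G)$: since $x\in C_G(x)=C_G(y)$, the element $x$ commutes with $y$, and $x\neq y$. This is the point that must be handled carefully, because the formula for the common neighbourhood in Proposition~\ref{grauX} depends on whether the two vertices are adjacent or not; here we are in the adjacent case, so Proposition~\ref{grauX}(ii) applies.

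Next comes the key computation. Because ${\cal T}(G)$ is strongly regular, it is $k$-regular for some $k$ with $0<k<v-1$, and any two adjacent vertices have exactly $\lambda$ common neighbours. By Proposition~\ref{grauX}(i), $k=\deg(x)=[C_G(x):Z(G)]-2$. By Proposition~\ref{grauX}(ii), since $x$ and $y$ are adjacent,
\[
\lambda=|N(x)\cap N(y)|=[C_G(x)\cap C_G(y):Z(G)]-3=[C_G(x):Z(G)]-3=k-1,
\]
where the third equality uses $C_G(x)=C_G(y)$. Thus $\lambda=k-1$, and Lemma~\ref{completodesconexo} (equivalence of (i) and (iii)) forces ${\cal T}(G)$ to be disconnected, contradicting the hypothesis that ${\cal T}(G)$ is connected strongly regular.

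There is no genuine obstacle: the only subtleties are the reduction to transversal representatives (so that $x,y$ are honest vertices) and checking that $x,y$ are adjacent so that the correct case of Proposition~\ref{grauX} is invoked. Alternatively, one can argue directly on neighbourhoods — since a vertex $z\notin\{x,y\}$ is adjacent to $x$ iff $z\in C_G(x)=C_G(y)$ iff it is adjacent to $y$, we get $N(x)\setminus\{y\}=N(y)\setminus\{x\}$, hence $N(x)\cap N(y)=N(x)\setminus\{y\}$ has $k-1$ elements — reaching the same conclusion $\lambda=k-1$.
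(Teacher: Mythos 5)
Your proof is correct and follows essentially the same route as the paper's: reduce to transversal representatives, note that $C_G(x)=C_G(y)$ forces $x$ and $y$ to be adjacent, apply Proposition~\ref{grauX} to get $\lambda=k-1$, and invoke Lemma~\ref{completodesconexo} for the contradiction. Your explicit justification of the adjacency of $x$ and $y$ is a small clarity improvement over the paper's more terse statement, but the argument is the same.
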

\begin{proof}
Suppose there is a non-abelian group $G$ such that ${\cal T}(G)$ is a connected strongly regular graph with parameters $(v,k,\lambda,\mu)$ and that there are $x,y\in G\setminus Z(G)$ such that $x\notin yZ(G)$ and $C_G(x)=C_G(y)$. For all $g\in xZ(G)$ and $h\in yZ(G)$ we have $[g,h]=[x,y]=1,$ $C_G(g)=C_G(x)$ and  $C_G(h)=C_G(y)$. Hence, we may assume that $x,y$ are vertices of ${\cal T}(G)$. From  Proposition \ref{grauX}, it follows that if  $n=[G:Z(G)]$ and $m=[C_G(x):Z(G)]$, then $v=n-1$, $k=m -2$ and $\lambda=m - 3=k-1$. By Lemma \ref{completodesconexo} the graph ${\cal T}(G)$ is  disconnected, a contradiction. 
\end{proof}

Our intention is to characterize the groups $G$ for which ${\cal T}(G)$  is connected strongly regular . As we   deal with a case where the graph  ${\cal T}(G)$ is regular (and so $G$ is of conjugate type $\{1,m\}$), in virtue of Theorem \ref{scGgammaG}, we can restrict our study to $p$-groups. 

\begin{lema}\label{gfrpimpar}
Given an odd prime $p$ let $G$ be a finite non-abelian $p$-group. If ${\cal T}(G)$ is strongly regular, then ${\cal T}(G)$ is disconnected.
\end{lema}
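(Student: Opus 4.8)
The plan is to suppose, for contradiction, that $G$ is a finite non-abelian $p$-group with $p$ odd for which ${\cal T}(G)$ is strongly regular and connected, and then to derive a contradiction. Since ${\cal T}(G)$ is regular, $G$ has exactly two conjugacy class sizes, so $G$ is of conjugate type $\{1,q\}$ for a prime power $q$; being a $p$-group forces $q=p^r$. By Theorem \ref{ishik1} (Ishikawa), once we know the conjugate type is $\{1,p\}$ the group is isoclinic to an extraspecial $p$-group; and by Proposition \ref{isoclinismocomutante} the graph ${\cal T}(G)$ is then isomorphic to ${\cal T}(E)$ for $E$ extraspecial of order $p^{2n+1}$. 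So the heart of the matter is to analyse ${\cal T}(E)$ for an extraspecial $p$-group $E$ with $p$ odd and show it is never connected strongly regular.

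For an extraspecial $p$-group $E$ of order $p^{2n+1}$ one has $|Z(E)|=p$, $[E:Z(E)]=p^{2n}$, and for every non-central $x$ the centralizer satisfies $[C_E(x):Z(E)]=p^{2n-1}$ (conjugate type $\{1,p\}$). Hence every vertex of ${\cal T}(E)$ has degree $k=p^{2n-1}-2$, on $v=p^{2n}-1$ vertices. The key structural fact is that $C_E(x)/Z(E)$ is a maximal (index-$p$) subspace of the $\mathbb{F}_p$-vector space $E/Z(E)$, equipped with the nondegenerate alternating form $\overline{x}\mapsto [x,\cdot]$; in fact $C_E(x)/Z(E) = \langle \overline{x}\rangle^{\perp}$. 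Thus for distinct cosets $\overline{x},\overline{y}$, the subspace $C_E(x)/Z(E)\cap C_E(y)/Z(E) = \langle \overline{x},\overline{y}\rangle^{\perp}$ has $\mathbb{F}_p$-dimension $2n-2$ when $\overline x,\overline y$ are linearly independent, so index $p^2$ in $E/Z(E)$. When $\overline x,\overline y$ lie in a common $2$-dimensional subspace but are independent and commuting (i.e. that subspace is totally isotropic), $x$ and $y$ are adjacent and $[C_E(x)\cap C_E(y):Z(E)]=p^{2n-2}$; applying Proposition \ref{grauX}(ii) gives $\lambda = p^{2n-2}-3$. For $\overline x,\overline y$ independent and non-commuting, they are non-adjacent and $[C_E(x)\cap C_E(y):Z(E)]=p^{2n-2}$ again, so Proposition \ref{grauX}(iii) yields a candidate common-neighbour count $p^{2n-2}-1$; but we must also handle the non-adjacent pairs coming from $\overline y \in \langle\overline x\rangle$ with $y\notin xZ(E)$: there $C_E(x)=C_E(y)$, so $[C_E(x)\cap C_E(y):Z(E)]=p^{2n-1}$ and the common-neighbour count is $p^{2n-1}-1$, which is different. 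Since $p$ is odd, the line $\langle \overline x\rangle$ contains $p-1\ge 2$ nonzero vectors, so such pairs genuinely exist and ${\cal T}(E)$ has two non-adjacent pairs with different numbers of common neighbours — contradicting strong regularity (equivalently, by Lemma \ref{condicaonecessariaconexo}, the existence of $x\notin yZ(G)$ with $C_G(x)=C_G(y)$ already forces ${\cal T}(G)$ disconnected, contradicting connectedness).

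So the cleanest route is: reduce to $E$ extraspecial via Ito/Ishikawa/isoclinism, then note that for $p$ odd the line through $\overline x$ in $E/Z(E)$ has more than one nonzero point, producing two distinct cosets $xZ(E)\ne yZ(E)$ with $C_E(x)=C_E(y)$; Lemma \ref{condicaonecessariaconexo} then immediately gives disconnectedness, contradicting our assumption. I would present it in exactly this order, leaning on Lemma \ref{condicaonecessariaconexo} to avoid recomputing parameters. The main obstacle — really the only subtle point — is making airtight the claim that in an extraspecial $p$-group two non-central elements with the same nonzero image-line in $E/Z(E)$ have equal centralizers; this follows because $C_E(x)$ depends only on $\langle\overline x\rangle$ via $C_E(x)/Z(E)=\langle\overline x\rangle^{\perp}$, and one should verify that $x^k$ (for $1\le k\le p-1$) is non-central and lies in a different $Z(E)$-coset than $x$, which is where oddness of $p$ is used (for $p=2$, $x^2\in Z(E)$ and the argument breaks, consistent with Proposition \ref{extraespecialGFR}).
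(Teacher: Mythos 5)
The reduction to extraspecial groups is where your argument has a genuine gap. Regularity of ${\cal T}(G)$ only tells you that $G$ is of conjugate type $\{1,p^r\}$ for some $r\geq 1$; Theorem \ref{ishik1} applies only when $r=1$, and you never rule out $r\geq 2$ (your own phrasing, ``once we know the conjugate type is $\{1,p\}$,'' concedes the point). Finite $p$-groups of conjugate type $\{1,p^2\}$ do exist and are not isoclinic to extraspecial groups, so the subsequent analysis via the symplectic form on $E/Z(E)$ simply does not cover all groups satisfying the hypothesis. Note that in the paper the analogous reduction for $p=2$ is only obtained after real work with the parameter identity $\mu(v-k-1)=k(k-\lambda-1)$ (Lemma \ref{srgconexo1} and Theorem \ref{srgconexo2}); nothing of that sort is available to you at the point where you invoke Ishikawa.

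The fix --- and the paper's actual proof --- bypasses the reduction entirely and is much shorter: for any non-abelian $p$-group with $p$ odd and any $x\in G\setminus Z(G)$, the element $x^2$ is again non-central (otherwise the image of $x$ in $G/Z(G)$ would have order dividing $2$, impossible in a group of odd prime-power order) and $x^2\notin xZ(G)$; moreover $C_G(x)\subseteq C_G(x^2)$, and regularity of ${\cal T}(G)$ gives $\deg(x)=\deg(x^2)$, so Proposition \ref{grauX}(i) forces $C_G(x)=C_G(x^2)$. Lemma \ref{condicaonecessariaconexo} then yields disconnectedness at once. Your closing observation about powers $x^k$ lying in a different coset of the center is exactly the right idea; it just has to be run in $G$ itself (with $k=2$ and the containment $C_G(x)\subseteq C_G(x^2)$), not inside an extraspecial quotient you have not justified passing to.
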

\begin{proof}
Suppose that ${\cal T}(G)$ is strongly regular. Given $x\in G\setminus Z(G)$, we have  $x^2\notin xZ(G)$; further, as $C_G(x)\subseteq C_G(x^2)$ and deg$(x)$=deg$(x^2)$ we get  $C_G(x)= C_G(x^2)$. Hence, by Lemma \ref{condicaonecessariaconexo},  the graph ${\cal T}(G)$ is disconnected, as desired.
\end{proof}

 Lemma \ref{gfrpimpar} say us that if $G$ is a $p$-group such that ${\cal T}(G)$ is a connected strongly regular graph, then $p=2$. 

For the proof of the next two results we will need the following relation, that  holds for any  strongly regular graph with parameters $(v,k,\lambda,\mu)$ (see  \cite[Proposition 2.6]{cameron}): 
\begin{equation}\label{relacaoparametrosGFR}
\mu(v-k-1)=k(k-\lambda - 1).
\end{equation}

\begin{lema}\label{srgconexo1}
Let $G$ be a finite non-abelian $2$-group such that ${\cal T}(G)$ is a connected strongly regular graph with parameters $(v,k,\lambda, \mu)$. Suppose that $|C_G(x)\cap C_G(y)|=|C_G(t)\cap C_G(w)|$, for any vertices $x,y,t,w$ of ${\cal T}(G)$, with $x\neq y$ and $t\neq w$. Then $G$ is isoclinic to an extraspecial $2$-group of order at least $32$.
\end{lema}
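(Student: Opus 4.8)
The plan is to extract from the strong regularity of ${\cal T}(G)$, together with the constant-intersection hypothesis, enough arithmetic constraints to force $G$ to be of conjugate type $\{1,2\}$, and then to invoke Theorem \ref{ishik1}. Since ${\cal T}(G)$ is regular, it was noted after Proposition \ref{grauX} that $|cs(G)|=2$, so $G$ is of conjugate type $\{1,m\}$; because $G$ is a non-abelian $2$-group, $m=[G:C_G(x)]$ is a power of $2$ and $m\geq 2$ for every $x\in G\setminus Z(G)$. I would set $n=[G:Z(G)]$ and $\ell=[C_G(x):Z(G)]$; the latter is independent of the non-central element $x$ and $n=\ell m$. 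By hypothesis, $c:=[C_G(x)\cap C_G(y):Z(G)]$ takes the same value for all pairs of distinct vertices $x,y$ of ${\cal T}(G)$. Proposition \ref{grauX} then tells us that the parameters of ${\cal T}(G)$ are $(v,k,\lambda,\mu)=(n-1,\ell-2,c-3,c-1)$; from $k>0$ we read off $\ell\geq 3$, and, ${\cal T}(G)$ being connected, Lemma \ref{completodesconexo} gives $\mu\geq 1$, i.e.\ $c\geq 2$.

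The core step is to substitute these parameters into the universal relation $\mu(v-k-1)=k(k-\lambda-1)$ recorded in (\ref{relacaoparametrosGFR}). One computes $v-k-1=n-\ell=\ell(m-1)$ and $k-\lambda-1=\ell-c$, so the relation becomes $(c-1)\ell(m-1)=(\ell-2)(\ell-c)$. Dividing through by $\ell$ and observing that the left-hand side is an integer, one finds that $d:=2c/\ell$ is a positive integer with $2c=\ell d$, and the relation rearranges to $(c-1)m=\ell+d-3$. The only ingredient left to use is $m\geq 2$; it gives $\ell+d-3=(c-1)m\geq 2(c-1)=\ell d-2$, hence $(\ell-1)(d-1)\leq 0$, and since $\ell\geq 3$ this forces $d=1$. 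Then $\ell=2c$ and $(c-1)m=\ell-2=2(c-1)$, and because $c\geq 2$ we conclude $m=2$.

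It remains to deduce the order bound. By Theorem \ref{ishik1}, the $2$-group $G$ of conjugate type $\{1,2\}$ is isoclinic to an extraspecial $2$-group $E$; isoclinic groups have isomorphic central quotients, so $[G:Z(G)]=[E:Z(E)]$, and writing $|E|=2^{2r+1}$ we get $n=2^{2r}$, whence $k=\ell-2=n/m-2=2^{2r-1}-2$. As $k>0$, necessarily $r\geq 2$, so $|E|=2^{2r+1}\geq 32$, as claimed. I do not anticipate a genuine obstacle: the argument is essentially one short computation, and its only subtle point is noticing that (\ref{relacaoparametrosGFR}) forces the divisibility $\ell\mid 2c$ — it is precisely this that makes the inequality coming from $m\geq 2$ collapse to $d=1$ and pin down $m$.
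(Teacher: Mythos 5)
Your proof is correct, and it follows the same skeleton as the paper's: substitute the parameters supplied by Proposition \ref{grauX} into the identity (\ref{relacaoparametrosGFR}), deduce that $G$ has conjugate type $\{1,2\}$, and finish with Theorem \ref{ishik1} plus the comparison of central quotients. Where you genuinely differ is in the arithmetic. The paper uses the $2$-group hypothesis from the outset, writing $[G:Z(G)]=2^{m+n+r}$, $[C_G(x):Z(G)]=2^{m+n}$ and $[C_G(x)\cap C_G(y):Z(G)]=2^{m}$, and then extracts $n=r=1$ from a parity analysis of the resulting exponential equation (with $\lambda\geq 0$ giving $m\geq 2$ and hence the order bound). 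Your computation never needs the indices to be powers of $2$: from $(c-1)\ell(m-1)=(\ell-2)(\ell-c)$ you read off the divisibility $\ell\mid 2c$, and the single inequality $m\geq 2$ then forces $d=1$ and $m=2$ via $(\ell-1)(d-1)\leq 0$. I checked the identities $(c-1)m=\ell+d-3$ and $2(c-1)=\ell d-2$; they are correct, as is the concluding bound $r\geq 2$ from $k=2^{2r-1}-2>0$. Your route is slightly more general — the $2$-group assumption enters only when you invoke Theorem \ref{ishik1} at the end — and arguably cleaner, at the cost of the small extra observation that the divisibility $\ell\mid 2c$ is what makes the inequality collapse.
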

\begin{proof} From the hypotheses it follows that
there exist non-negative integers $m,n,r$ such that  $[G:Z(G)]=2^{m+n+r}$, $[C_G(x):Z(G)]=2^{m+n}$ and $[C_G(x)\cap C_G(y): Z(G)]=2^{m}$, for any distinct vertices $x$ and $y$ of ${\cal T}(G)$. Thus, Proposition \ref{grauX} provides us
\[v=2^{m+n+r}-1, \qquad k=2^{m+n}-2, \qquad \lambda=2^{m}-3, \qquad \mu=2^{m}-1. 
\]
Because $\lambda \geq 0$, we must have $m\geq 2$. 
From  (\ref{relacaoparametrosGFR}) we obtain $(2^{n+r} - 2^{n})(2^{m}-1)=(2^{n}-1)(2^{m+n} - 2)$ and, consequently, 
$2^n(2^{m+r} - 2^r  + 1 - 2^{m+n} + 2)=2.$
Because $\mu\leq k$ we get $n\geq 1$ and from the last equality above we conclude  $n=1$. Thus $2^{m+r} - 2^r  + 1 - 2^{m+1} + 2=1$ and so $2^m(2^r-2)=2^r - 2$, which implies $r=1$. Hence, $[G:C_G(x)]=2$, for any $x\in G\setminus Z(G)$, that is, $G$ is of conjugate type $\{1,2\}$. Now, from  Theorem \ref{ishik1} it follows that $G$ is isoclinic to an extraspecial $2$-group $E$. As $m\geq 2$ and $n=r=1$ we obtain $[E:Z(E)]=[G:Z(G)]\geq 16$ and so $E$ has order at least $32$.
\end{proof}

\begin{teo}\label{srgconexo2}
If $G$ is a finite non-abelian $2$-group such that ${\cal T}(G)$ is  connected strongly regular, then $G$ is isoclinic to an extraspecial $2$-group of order at least $32$.
\end{teo}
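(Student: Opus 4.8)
The plan is to reduce Theorem \ref{srgconexo2} to Lemma \ref{srgconexo1} by showing that, for a finite non-abelian $2$-group $G$ with ${\cal T}(G)$ connected strongly regular, the size $|C_G(x)\cap C_G(y)|$ is actually the same for every pair of distinct vertices $x,y$ of ${\cal T}(G)$. Once that uniformity is established, Lemma \ref{srgconexo1} immediately gives the conclusion. So the whole task is to prove the hypothesis of Lemma \ref{srgconexo1} is automatic here.

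To do this, I would exploit the strong regularity of ${\cal T}(G)$ together with part (ii) and (iii) of Proposition \ref{grauX}. Let ${\cal T}(G)$ have parameters $(v,k,\lambda,\mu)$; set $m_{xy}:=[C_G(x)\cap C_G(y):Z(G)]$ for distinct vertices $x,y$. By Proposition \ref{grauX}, if $x,y$ are adjacent then $|N(x)\cap N(y)|=m_{xy}-3$, and strong regularity forces $m_{xy}-3=\lambda$, i.e. $m_{xy}=\lambda+3$ for \emph{every} adjacent pair; if $x,y$ are non-adjacent then $m_{xy}-1=\mu$, so $m_{xy}=\mu+1$ for every non-adjacent pair. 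Hence $|C_G(x)\cap C_G(y):Z(G)|$ already takes only the two possible values $\lambda+3$ and $\mu+1$. The remaining point is to rule out the possibility $\lambda+3\neq\mu+1$, i.e. to show $\lambda=\mu-2$. Here I would use Lemma \ref{condicaonecessariaconexo}: since ${\cal T}(G)$ is connected strongly regular, distinct cosets have distinct centralizers, so for adjacent vertices $x,y$ we have $C_G(x)\cap C_G(y)\subsetneq C_G(x)$, hence $[C_G(x):C_G(x)\cap C_G(y)]\geq 2$; combined with $[C_G(x):Z(G)]=k+2$ this gives $\lambda+3=m_{xy}\leq (k+2)/2$. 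The commutator trick used in Proposition \ref{extraespecialGFR} — using that $G'$ restricted appropriately behaves like a group of order $2$ — should pin down the index to be exactly $2$, giving $\lambda+3=(k+2)/2$, and an analogous count for a non-adjacent pair (using connectivity to find such a pair whose common centralizer can be compared) should yield $\mu+1$ in terms of $k$ and the other indices, ultimately forcing $\lambda+3=\mu+1$.

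An alternative and perhaps cleaner route to $\lambda=\mu-2$ is purely combinatorial: plug $\lambda+3=m_1$ and $\mu+1=m_2$ with $m_1,m_2$ being $2$-power-indices over $Z(G)$ into the strongly regular parameter identity (\ref{relacaoparametrosGFR}), together with $k=2^{a}-2$ for the appropriate exponent $a$ coming from $[C_G(x):Z(G)]$; the arithmetic of $2$-adic valuations (exactly the style of computation carried out in the proof of Lemma \ref{srgconexo1}) should force $m_1=m_2$, hence $\lambda+3=\mu+1$, which is precisely the uniformity hypothesis of Lemma \ref{srgconexo1}. Then invoking Lemma \ref{srgconexo1} finishes the proof.

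The main obstacle I anticipate is the last step — showing $\lambda+3=\mu+1$, equivalently that the two a priori possible common-centralizer sizes coincide. It is conceivable a priori that the "adjacent" common centralizers and "non-adjacent" common centralizers have genuinely different orders; ruling this out is exactly the content that makes the hypothesis of Lemma \ref{srgconexo1} non-vacuous, and it will require either the $2$-adic valuation bookkeeping against identity (\ref{relacaoparametrosGFR}) or a structural argument that $[G:C_G(x)]$ is already forced to be a single prime power (via connectivity + Lemma \ref{condicaonecessariaconexo}, pushing toward Ito's theorem \ref{scGgammaG} and Ishikawa's classification \ref{ishik1}). Everything else is a direct application of Propositions \ref{grauX} and the already-established lemmas.
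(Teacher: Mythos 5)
Your overall strategy coincides with the paper's: both reduce to Lemma \ref{srgconexo1} by showing that $[C_G(x)\cap C_G(y):Z(G)]$ can only take the two values $\lambda+3$ (adjacent pairs) and $\mu+1$ (non-adjacent pairs), and then proving these coincide. However, the decisive step --- ruling out $\lambda+3\neq\mu+1$ --- is exactly where your proposal stops at ``should force,'' and neither of the two tools you suggest actually closes it. The commutator trick from Proposition \ref{extraespecialGFR} is unavailable: it rests on $|G'|=2$, which is essentially the conclusion you are trying to reach, not a hypothesis you may use. More seriously, the parameter identity (\ref{relacaoparametrosGFR}) alone does \emph{not} force the two values to agree. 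The paper's own computation shows this: in the case where the adjacent-pair value exceeds the non-adjacent one, the $2$-adic bookkeeping against (\ref{relacaoparametrosGFR}) does not terminate in a contradiction but instead lands on the consistent one-parameter family $v=2^{2s+1}-1$, $k=2^{s+1}-2$, $\lambda=2^{s}-3$, $\mu=1$, in which $\lambda+3=2^{s}\neq 2=\mu+1$ for $s\geq 2$. Killing this family requires a genuinely new ingredient, namely the integrality of the eigenvalue multiplicities of a strongly regular graph (the rationality conditions of Cameron--van Lint), which reduces to showing $4^{s-1}=a^{2}-1$ has no integer solution. Without that extra condition your argument has a real gap; the opposite inequality (non-adjacent value larger) is the only case the identity disposes of on its own, via $k<\mu$.

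The positive content of your proposal --- the reduction to Lemma \ref{srgconexo1}, the observation that strong regularity pins the common-centralizer index to two candidate values, and the instinct to feed $2$-power parameters into (\ref{relacaoparametrosGFR}) --- is all correct and is exactly how the paper begins. What is missing is the recognition that one of the two resulting Diophantine cases survives the parameter identity and must be eliminated by the multiplicity (integrality) condition for strongly regular graphs.
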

\begin{proof}
By Lemma \ref{srgconexo1}, it is sufficient to show that $|C_G(x)\cap C_G(y)|=|C_G(t)\cap C_G(w)|$, for any vertices $x,y,t,w$ of ${\cal T}(G)$, where $x\neq y$ and $t\neq w$. Let us assume the contrary, that is, there are vertices $x,y,t,w$ of ${\cal T}(G)$ such that 
\begin{equation} \label{desigualdade1}
|C_G(x)\cap C_G(y)|>|C_G(t)\cap C_G(w)|. 
\end{equation}
Taking into account  Proposition \ref{grauX} and the fact that   ${\cal T}(G)$ is  strongly regular, we have two cases to analyze:

\emph{{\underline{Case 1}}}:  $[x,y]\neq 1$ and $[t,w]= 1$. Due to Proposition \ref{grauX} and (\ref{desigualdade1}), we can write
$$v=2^{m+n+r+s}-1 \qquad k=2^{m+n+r}-2 \qquad \lambda=2^{m}-3, \qquad \mu=2^{m+n}-1,$$
where $m,n,r,s$ are non-negative integers, with $n\geq 1$.
From $\mu(v-k-1)=k(k-\lambda - 1)$ we obtain 
$$(2^{m+n} - 1)(2^{n+r+s}-2^{n+r})=(2^{m+n+r}-2)(2^{n+r} - 1),$$
 which is equivalent to
$$2^n(2^{m+n+r+s} - 2^{m+n+r}-2^{r+s}+2^r - 2^{m+n+2r} + 2^{m+r} + 2^{r+1})=2.$$
Since $n\geq 1$, this forces $n=1$ and 
$$2^{m+n+r+s} - 2^{m+n+r}-2^{r+s}+2^r - 2^{m+n+2r} + 2^{m+r} + 2^{r+1}=1,$$
that is, $2^r(2^{m+s+1} - 2^{m+1} - 2^s + 1 - 2^{m+r+1} + 2^m +2)=1$. For such equality to occur, we must have $r=0$, which give us $k<\mu$, a contradiction.

\emph{{\underline{Case 2}}}: $[x,y]= 1$ and $[t,w]\neq 1$. Considering (\ref{desigualdade1}), we may write
$$v=2^{m+n+r+s}-1, \qquad k=2^{m+n+r}-2, \qquad \lambda=2^{m+n}-3, \qquad \mu=2^{m}-1,$$
where $m,n,r,s$ are non-negative integers, with $n\geq 1$. Note that $s,m\neq 0$ because $k\neq v-1$ and $\mu \neq 0$.
From the identity $\mu(v-k-1)=k(k-\lambda - 1)$ we get $(2^{m} - 1)(2^{r+s}-2^{r})=(2^{m+n+r}-2)(2^{r} - 1)$ and so $2^r(2^{m+s} - 2^m - 2^s + 1-2^{m+n+r} + 2^{m+n} +2)=2$, which implies $r\in\{0,1\}$. However, by Lemma \ref{completodesconexo}, $\lambda \neq  k-1$; thus $r=1$ and, consequently, $$2^{m+s} - 2^m - 2^s + 1-2^{m+n+1} + 2^{m+n} +2=1.$$
It follows from the last equality that $2^{m}(2^s - 1 - 2^{n+1} +2^n)=2(2^{s-1} - 1)$. 

If $s=1$, we obtain $1-2^{n+1}+2^n=0$, that is, $n=0$, a contradiction.

Suppose now $s\geq 2$. Since $m\geq 1$, we conclude that the number $2^{m-1}(2^s - 1 - 2^{n+1} +2^n)=2^{s-1} - 1$ is odd and this implies $m=1$. From the penultimate equality we get $2^s - 1 - 2^{n+1}+2^n = 2^{s-1} - 1$, which gives us $2^{s-1}=2^n$, that is, $s-1=n$. Hence, in this case, the parameters of ${\cal T}(G)$ are:
\begin{equation} \label{parametros1}
v=2^{2s+1}-1, \qquad k=2^{s+1}-2, \qquad \lambda=2^{s}-3, \qquad \mu=1.
\end{equation}
Putting $\gamma=(\mu-\lambda)^2 + 4(k-\mu)$, it follows from (\cite{cameron}, Theorem 2.16) that the numbers $m_1$ and $m_2$ given below are non-negative integers:
$$\displaystyle{m_1=\frac{1}{2}\left(v-1 +\frac{2k+(v-1)(\mu-\lambda)}{\sqrt{\gamma}}\right)}, \qquad \displaystyle{m_2=\frac{1}{2}\left(v-1 -\frac{2k+(v-1)(\mu-\lambda)}{\sqrt{\gamma}}\right)}.$$
Note that  $2k+(v-1)(\mu-\lambda)=2^{s+1}(2^{s+2}-2^{2s}+3)-12$ and this number is nonzero  for $s\geq 2$.

From the parameters obtained in (\ref{parametros1}), we obtain $\gamma=(4-2^s)^2 + 4(2^{s+1} - 3)=4(4^{s-1} + 1)$ and, thus, $\sqrt{\gamma}=2\sqrt{4^{s-1}+1}$. Since $m_1$ and $m_2$ are non-negative integers, there is $a\in\mathbb{Q}$ such that  $4^{s-1}=a^2 -1$. Then $a$ must be an odd integer, say $a=2l+1$, and  we get $4^{s-1}=4l(l+1)$, a contradiction. 

Now, the result follows from Lemma \ref{srgconexo1}. 
\end{proof}

We are now ready to show Theorem A.

\begin{proof}[{\bf Proof of Theorem A}]
Let $G$ be a finite non-abelian group such that ${\cal T}(G)$ is  connected strongly regular.  Theorem \ref{scGgammaG} tells us that $G$ admits a decomposition $G=A\times P$, where $A$ is an abelian subgroup of $G$ and $P$ is a Sylow $p$-subgroup of $G$ of conjugate type $\{1,p^r\}$ for some positive integer $r$. Now \cite[page 286]{berkovich} ensures us that $G$ and $P$ are isoclinic and so ${\cal T}(G) \cong {\cal T}(P)$.  Lemma  \ref{gfrpimpar} forces $p=2$ and  Theorem \ref{srgconexo2} yields us that  $P$ is isoclinic to an extraspecial $2$-group $E$ of order at least $32$, that is,  $|E|=2^{2n+1}$, for some $n \geq 2$. Now, the result follows from Propositions \ref{extraespecialGFR} and \ref{isoclinismocomutante}, taking into account that
 $G$ and $E$ are isoclinic groups. 
\end{proof}

As an immediate consequence of the proof of Theorem A, we obtain

\begin{corol}
Let $G$ be a finite non-abelian group. The graph ${\cal T}(G)$ is  connected strongly regular if and only if $G$ is isoclinic to an extraspecial $2$-group of order at least $32$.
\end{corol}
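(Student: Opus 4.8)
The plan is to deduce the corollary directly from the proof of Theorem~A already carried out in the excerpt, rather than re-proving anything from scratch. The corollary is a purely group-theoretic restatement of Theorem~A: Theorem~A describes the relevant groups as those admitting a decomposition $G=A\times P$ with $A$ abelian and $P$ a Sylow $2$-subgroup isoclinic to an extraspecial $2$-group of order $2^{2n+1}$ with $n\geq 2$, while the corollary collapses this to the single condition that $G$ itself be isoclinic to such an extraspecial $2$-group. So the entire content of the proof is the observation that these two conditions are equivalent.

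First I would prove the forward direction. Assume ${\cal T}(G)$ is connected strongly regular. By Theorem~A (or, what is the same, by the chain of reasoning in its proof: Theorem~\ref{scGgammaG} plus the remark in \cite[page 286]{berkovich}), $G=A\times P$ with $A$ abelian and $G$ isoclinic to $P$, and then Lemma~\ref{gfrpimpar} together with Theorem~\ref{srgconexo2} shows $P$ is isoclinic to an extraspecial $2$-group $E$ of order at least $32$. Since isoclinism is an equivalence relation (stated in the excerpt just after the definition) and $G$ is isoclinic to $P$, which is isoclinic to $E$, transitivity gives that $G$ is isoclinic to $E$, an extraspecial $2$-group of order at least $32$. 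This is exactly the forward implication; no new computation is needed.

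For the converse, suppose $G$ is isoclinic to an extraspecial $2$-group $E$ with $|E|=2^{2n+1}$, $n\geq 2$. Proposition~\ref{isoclinismocomutante} gives ${\cal T}(G)\cong{\cal T}(E)$, and Proposition~\ref{extraespecialGFR} tells us ${\cal T}(E)$ is connected strongly regular with parameters $(2^{2n}-1,\,2^{2n-1}-2,\,2^{2n-2}-3,\,2^{2n-2}-1)$. Hence ${\cal T}(G)$ is connected strongly regular, completing the equivalence.

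There is no real obstacle here: the only thing to be careful about is making explicit the use of transitivity of isoclinism to pass from ``$G$ is isoclinic to its Sylow $2$-subgroup $P$, which is isoclinic to $E$'' to ``$G$ is isoclinic to $E$,'' and conversely noting that the decomposition $G=A\times P$ in Theorem~A is subsumed by ``$G$ isoclinic to $E$'' precisely because the abelian direct factor $A$ does not affect the isoclinism family. If one wished to write the converse portion of Theorem~A's statement out fully one would also remark that any $G$ isoclinic to an extraspecial $2$-group does admit such an $A\times P$ decomposition by Theorem~\ref{scGgammaG}, but for the corollary this is not needed. The proof is therefore a short paragraph invoking the equivalence relation property of isoclinism, Proposition~\ref{isoclinismocomutante}, and Proposition~\ref{extraespecialGFR}.
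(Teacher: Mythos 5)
Your proof is correct and follows the same route as the paper, which presents this corollary as an immediate consequence of the proof of Theorem A: the forward direction reuses the chain Theorem \ref{scGgammaG} $\to$ isoclinism of $G$ and $P$ $\to$ Lemma \ref{gfrpimpar} and Theorem \ref{srgconexo2} together with transitivity of isoclinism, and the converse invokes Propositions \ref{isoclinismocomutante} and \ref{extraespecialGFR}. Your explicit remark about transitivity of isoclinism is exactly the point the paper leaves implicit.
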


We also characterize the finite non-abelian groups $G$ for which the graph ${\cal T}(G)$ is a disjoint union of complete graphs of the same size.

\begin{prop} \label{caracterizacao}
Let $G$ be a finite non-abelian group and let $m,n$ be  positive integers.  The graph ${\cal T}(G)$ is a disjoint union of $m$ copies of $K_{n}$  if and only if   $mn=[G:Z(G)]-1$ and for any $x\in G\setminus Z(G)$ we have $C_G(x)$ is abelian and $[C_G(x):Z(G)]=n+1$.
\end{prop}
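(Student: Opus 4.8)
The plan is to prove the biconditional characterization of when $\mathcal{T}(G)$ is a disjoint union of $m$ copies of $K_n$ directly from Proposition \ref{grauX} and the structural description of the vertex set in terms of centralizers. Throughout, put $Z=Z(G)$.

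\textbf{The forward direction.} Suppose $\mathcal{T}(G)\cong mK_n$. Since the vertex set of $\mathcal{T}(G)$ consists of the non-central cosets of $Z$ in $G$, there are exactly $[G:Z]-1$ vertices, so counting vertices in $mK_n$ gives $mn=[G:Z]-1$. Now fix $x\in G\setminus Z$; we may regard $x$ as a vertex of $\mathcal{T}(G)$, and it lies in some connected component, which is a copy of $K_n$. Thus $\deg(x)=n-1$, and Proposition \ref{grauX}(i) gives $[C_G(x):Z]-2=n-1$, i.e.\ $[C_G(x):Z]=n+1$. It remains to show $C_G(x)$ is abelian. In the proof of Proposition \ref{grauX}(i) it is shown that $C_G(x)=Z\cup xZ\cup\bigcup_{i} x_iZ$, where $x_1,\dots,x_{n-1}$ are the neighbours of $x$ in $\mathcal{T}(G)$; since $x$ and all the $x_i$ lie in the same complete component $K_n$, they are pairwise adjacent, meaning all of $x,x_1,\dots,x_{n-1}$ commute pairwise. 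As these cosets, together with $Z$, exhaust $C_G(x)$ and each coset of $Z$ is itself a set of mutually commuting elements commuting with everything a representative commutes with, every pair of elements of $C_G(x)$ commutes, so $C_G(x)$ is abelian.

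\textbf{The converse direction.} Suppose $mn=[G:Z]-1$ and that for every $x\in G\setminus Z$ the centralizer $C_G(x)$ is abelian with $[C_G(x):Z]=n+1$. By Proposition \ref{grauX}(i), every vertex of $\mathcal{T}(G)$ has degree $[C_G(x):Z]-2=n-1$, so $\mathcal{T}(G)$ is $(n-1)$-regular. The key claim is that each connected component of $\mathcal{T}(G)$ is a clique; equivalently, the relation ``$=$ or adjacent'' is transitive on vertices. Suppose $x,y$ and $y,z$ are edges, with $x,y,z$ distinct vertices. Then $y\in C_G(x)$ and $z,x\in C_G(y)$; since $C_G(y)$ is abelian and $x,z\in C_G(y)$, we get $[x,z]=1$, so $x$ and $z$ are adjacent (or equal, but they are distinct). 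Hence being in the same component is exactly being equal-or-adjacent, so each component is a complete graph; being $(n-1)$-regular, each component is $K_n$. Finally the number of components is $([G:Z]-1)/n=m$, so $\mathcal{T}(G)\cong mK_n$.

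\textbf{Main obstacle.} The only subtlety is the abelianness argument in each direction: in the forward direction one must be careful that the union-of-cosets description of $C_G(x)$ from Proposition \ref{grauX}, combined with the fact that the component containing $x$ is complete, really does force all pairs in $C_G(x)$ to commute (using that whether two cosets ``commute'' depends only on representatives, as recalled in the introduction); in the converse direction, the transitivity-of-commuting argument works precisely because $C_G(x)$ is assumed abelian, which is what upgrades a connected component into a clique. Everything else is a vertex count. I would present the two directions in the order above, isolating the ``each component is complete'' observation as the conceptual heart of the proof.
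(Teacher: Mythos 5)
Your proof is correct and follows essentially the same route as the paper's: vertex and degree counts via Proposition \ref{grauX}(i) for the numerical conditions, the coset decomposition $C_G(x)=Z\cup xZ\cup\bigcup_i x_iZ$ plus completeness of the component to get abelianness, and abelianness of centralizers to show each component is a clique in the converse. Your explicit transitivity argument is a slightly more careful phrasing of the paper's claim that the subgraph on $\{x\}\cup N(x)$ is complete, but the content is the same.
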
 
\begin{proof}
Suppose that ${\cal T}(G)$ is a disjoint union of  $m$ copies of $K_{n}$. It is clear that $mn=[G:Z(G)]-1$. Let $T=\{1,x_1,\ldots, x_{mn}\}$ be the transversal of $Z(G)$ in $G$ such that $V({\cal T}(G))=T\setminus\{1\}$. Given $x\in G\setminus Z(G)$, there is $i\in\{1,\ldots, mn\}$ such that $x\in x_iZ(G)$. If $N(x_i)=\{y_1,\ldots,y_{n-1}\}$ is the neighborhood of $x_i$ in ${\cal T}(G)$, then deg$(x_i) = n-1$. Since deg$(x_i) = [C_G(x_i):Z(G)]-2$ (by Proposition \ref{grauX}), we have  $[C_G(x_i):Z(G)]=n+1$. Moreover, $C_G(x_i)=Z(G)\cup x_iZ(G)\cup \left(\bigcup_{y\in N(x_i)} yZ(G)\right)$ and $C_G(x_i)$ is abelian, because the subgraph generated by $\{x_i\}\cup N(x_i)$ is complete. Since $C_G(x_i)=C_G(x)$, we conclude that $[C_G(x):Z(G)]=n+1$ and $C_G(x)$ is abelian. 

Conversely,  assume that  $[G:Z(G)]-1=mn$ and for any $x\in G\setminus Z(G)$, $C_G(x)$ is abelian and $[C_G(x):Z(G)]=n+1$.  Let $x$ be a vertex of ${\cal T}(G)$. If $N(x)=\{y_1,\ldots,y_{n-1}\}$ is the neighborhood of $x$ in ${\cal T}(G)$, then the subgraph generated by $\{x\}\cup N(x)$ is isomorphic to $K_{n}$, because $C_G(x)=Z(G)\cup xZ(G)\cup \left(\bigcup_{y\in N(x)} yZ(G)\right)$ and $C_G(x)$ is abelian. In addiction, as the non-central elements of $G$ have centralizers of equal size, ${\cal T}(G)$ is a disjoint union of $m$  graphs isomorphic to  $K_n$.
\end{proof}

It is worth mentioning that when $G$ is a non-abelian $p$-group, the case $n=1$ in Proposition \ref{caracterizacao} is also a consequence of \cite[Corollary 1]{akbari}. Further, if $G$ is a group such that ${\cal T}(G)\cong mK_n$, the result above produces $m\geq 3$.

We have already seen (\cite[Lemma 10.1.1]{godsil}) that if a graph ${\cal G}$ is disconnected strongly regular, then it is isomorphic to $mK_n$ for some integers $m,n\geq 2$. Of course, the reciprocal also holds.   Thus, as an immediate consequence from Proposition \ref{caracterizacao}, we obtain 

\begin{corol} \label{caracterizacao2}
Let $G$ be a finite non-abelian group. The graph ${\cal T}(G)$ is disconnected strongly regular if and only if the centralizers of the non-central elements of $G$  are all abelian and there exists an integer $r\geq 3$ such that $[C_G(x):Z(G)]=r$, for all $x\in G\setminus Z(G)$.
\end{corol}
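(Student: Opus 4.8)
\textbf{Proof proposal for Corollary \ref{caracterizacao2}.}

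The plan is to reduce everything to Proposition \ref{caracterizacao}, via the dichotomy for disconnected strongly regular graphs recorded in Lemma \ref{completodesconexo}. First I would recall that by Lemma \ref{completodesconexo}, a graph is disconnected strongly regular if and only if it is isomorphic to $mK_{k+1}$ for some integer $m>1$; conversely, a disjoint union $mK_n$ with $m,n\geq 2$ is plainly $(mn, n-1, n-2, 0)$-strongly regular, so ``disconnected strongly regular'' is \emph{equivalent} to ``$\cong mK_n$ for some integers $m,n\geq 2$.'' Thus the statement to prove is: ${\cal T}(G)\cong mK_n$ for some $m,n\geq 2$ if and only if every $C_G(x)$ ($x\in G\setminus Z(G)$) is abelian and $[C_G(x):Z(G)]=r$ for some fixed $r\geq 3$.

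For the forward direction, assume ${\cal T}(G)\cong mK_n$ with $m,n\geq 2$. By the remark following Proposition \ref{caracterizacao}, in fact $m\geq 3$, but what I actually need is just that Proposition \ref{caracterizacao} applies: it gives that $C_G(x)$ is abelian and $[C_G(x):Z(G)]=n+1$ for every non-central $x$. Setting $r=n+1$, we have $r\geq 3$ since $n\geq 2$, which is exactly the desired conclusion. For the converse, suppose every $C_G(x)$ is abelian and $[C_G(x):Z(G)]=r$ with $r\geq 3$, fixed. Put $n=r-1\geq 2$; since $G$ is non-abelian, $[G:Z(G)]-1\geq 1$, and because all non-central centralizers have the same index $r$ over $Z(G)$, the value $[G:Z(G)]-1$ is a multiple of $n$ (this falls out of the degree formula in Proposition \ref{grauX}(i), every vertex of ${\cal T}(G)$ having degree $r-2=n-1$, so the graph is $(n-1)$-regular on $[G:Z(G)]-1$ vertices and its components, being cliques by the abelianness hypothesis, all have size $n$). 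Write $[G:Z(G)]-1=mn$. Then Proposition \ref{caracterizacao} yields ${\cal T}(G)\cong mK_n$, and I must check $m\geq 2$: if $m=1$ then ${\cal T}(G)\cong K_n$ is connected and complete, forcing $G\setminus Z(G)$ to consist of a single coset together with... more carefully, ${\cal T}(G)\cong K_n$ means $G=C_G(x)$ for any non-central $x$, i.e.\ $G$ is abelian, contradicting the hypothesis; hence $m\geq 2$, and ${\cal T}(G)\cong mK_n$ with $m,n\geq 2$ is disconnected strongly regular.

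The only genuinely delicate point is the bookkeeping in the converse: one must make sure that the hypothesis ``$[C_G(x):Z(G)]=r$ for all non-central $x$'' together with abelianness of the $C_G(x)$ really does force ${\cal T}(G)$ to be a \emph{disjoint} union of cliques of the uniform size $n=r-1$, with the number of them being $m=([G:Z(G)]-1)/n\geq 2$. This is exactly the content of the converse half of Proposition \ref{caracterizacao} once one observes that the hypotheses of that proposition are met, so no new argument is needed --- the work is entirely in verifying that $n\mid [G:Z(G)]-1$ (automatic, as the components are disjoint cliques of size $n$) and that $m\neq 1$ (automatic from non-abelianness). So in fact the corollary is nearly immediate from Proposition \ref{caracterizacao} and Lemma \ref{completodesconexo}, the ``hard part'' being merely to state the equivalence ``disconnected strongly regular $\iff$ $\cong mK_n$, $m,n\geq 2$'' cleanly and to handle the degenerate cases $m=1$ and $n=1$.
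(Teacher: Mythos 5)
Your proposal is correct and follows essentially the same route as the paper, which derives the corollary directly from Proposition \ref{caracterizacao} together with the equivalence (via Lemma \ref{completodesconexo}) between ``disconnected strongly regular'' and ``isomorphic to $mK_n$ with $m,n\geq 2$.'' Your extra bookkeeping (checking $n\mid [G:Z(G)]-1$ from the clique structure and ruling out $m=1$ by non-abelianness) just makes explicit what the paper leaves as immediate.
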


Akbari and Moghaddamfar \cite[Corollary 2(a)]{akbari} proved that  a commuting graph $\Gamma (G)$ is strongly regular if and only if it is a disjoint union of complete graphs on $|Z(G)|$ vertices. However, Example \ref{M3} shows that the connected components of a strongly regular graph $\Gamma (G)$ do not necessarily  have size equal to $|Z(G)|$. In fact, we have the following result, which corrects  the error in \cite[Corollary 2(a)]{akbari}:

\begin{prop} \label{correcao}
Let $G$ be a finite non-abelian group. The graph $\Gamma(G)$ is  strongly regular  if and only if $\Gamma(G)\cong mK_s$, where $ms=|G\setminus Z(G)|$ and  $s=([C_G(x):Z(G)]-1)|Z(G)|$, for all $x\in G\setminus Z(G)$.
\end{prop}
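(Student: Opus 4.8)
The plan is to reduce the statement about $\Gamma(G)$ to the already-proved characterization of when ${\cal T}(G)$ is a disjoint union of complete graphs (Proposition \ref{caracterizacao} and Corollary \ref{caracterizacao2}), using the explicit description of how $\Gamma(G)$ is built from ${\cal T}(G)$ given at the start of Section 2. Recall that each vertex $x$ of ${\cal T}(G)$ is ``blown up'' into the coset $V_x = xZ(G)$, and two blown-up vertices are adjacent in $\Gamma(G)$ precisely when they lie in the same coset or in adjacent cosets of ${\cal T}(G)$. Consequently, if ${\cal T}(G) \cong m K_t$ then $\Gamma(G) \cong m K_{t|Z(G)|}$: each clique $K_t$ of ${\cal T}(G)$ expands to a clique on $t|Z(G)|$ vertices, and no edges are created between different components. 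This gives one direction essentially for free.

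First I would prove the ``if'' direction. Assuming $\Gamma(G) \cong mK_s$ with the stated numerology, it is immediate that $\Gamma(G)$ is a disjoint union of complete graphs all of the same size with $s < |G\setminus Z(G)|$ (since $G$ is non-abelian, $m \geq 2$; in fact the final remark after Proposition \ref{caracterizacao} combined with $s = ([C_G(x):Z(G)]-1)|Z(G)|$ forces $m \geq 3$), hence $0 < s < |V(\Gamma(G))| - 1$, so $\Gamma(G)$ is strongly regular with $\lambda = s-1$, $\mu = 0$. For the ``only if'' direction, suppose $\Gamma(G)$ is strongly regular. By Akbari–Moghaddamfar's result, quoted via \cite[Corollary 2]{akbari} and Lemma \ref{completodesconexo}, a strongly regular commuting graph $\Gamma(G)$ is disconnected, hence isomorphic to $mK_s$ for some $m > 1$ and some $s \geq 2$ (here $s \geq 2$ because $Z(G) \subsetneq C_G(x)$ for non-central $x$, so each component has more than one coset... actually each component has at least $2|Z(G)| \geq 2$ vertices). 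Then I would argue that $\Gamma(G) \cong mK_s$ forces ${\cal T}(G) \cong m K_{s/|Z(G)|}$: the components of $\Gamma(G)$ are unions of full cosets $xZ(G)$ (two elements of the same coset are always adjacent and have the same centralizer, so lie in the same component), so $|Z(G)|$ divides $s$, and collapsing each coset recovers ${\cal T}(G)$ as a disjoint union of complete graphs, namely $mK_t$ with $t = s/|Z(G)|$. Applying Proposition \ref{caracterizacao} to ${\cal T}(G) \cong mK_t$ yields that $C_G(x)$ is abelian and $[C_G(x):Z(G)] = t+1$ for every non-central $x$; hence $s = t|Z(G)| = ([C_G(x):Z(G)] - 1)|Z(G)|$, and $ms = m t |Z(G)| = ([G:Z(G)] - 1)|Z(G)| = |G \setminus Z(G)|$, which is exactly the asserted description.

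The routine calculations are the counting identities $ms = |G\setminus Z(G)|$ and $s = ([C_G(x):Z(G)]-1)|Z(G)|$, which follow directly from Proposition \ref{caracterizacao} and the coset-blow-up correspondence. The step requiring the most care is the precise correspondence between components of $\Gamma(G)$ and components of ${\cal T}(G)$: one must verify that every connected component of $\Gamma(G)$ is a union of complete cosets of $Z(G)$ and that adjacency of cosets in $\Gamma(G)$ matches adjacency of the corresponding vertices in ${\cal T}(G)$ — both of which are recorded in the discussion opening Section 2 (conditions (1) and (2) there) and in the observation from \cite{VT2010} that $x,y$ are adjacent in $\Gamma(G)$ iff all their coset representatives are. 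I do not anticipate a genuine obstacle; the only subtlety is to make sure the translation between the $mK_s$ description of $\Gamma(G)$ and the $mK_t$ description of ${\cal T}(G)$ is stated cleanly enough that invoking Proposition \ref{caracterizacao} is legitimate, and to confirm $G$ non-abelian indeed forces $m \geq 2$ (indeed $m\geq 3$) so that the strong-regularity constraint $0<k<v-1$ is met.
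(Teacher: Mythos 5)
Your proof is correct, but it follows a genuinely different route from the paper's. For the ``only if'' direction the paper does not pass through ${\cal T}(G)$ at all: it first notes (via Theorem \ref{scGgammaG}) that regularity of $\Gamma(G)$ forces $|Z(G)|>1$, then observes that two distinct elements $xz_1,xz_2$ of the same non-central coset are adjacent with $C_G(xz_1)=C_G(xz_2)$, which pins down $\lambda=k-1$; the identity $\mu(v-k-1)=k(k-\lambda-1)$ together with $k<v-1$ then gives $\mu=0$, and Lemma \ref{completodesconexo} yields $\Gamma(G)\cong mK_s$, after which the value of $s$ is a direct degree count. You instead import the disconnectedness from \cite[Corollary 2]{akbari}, collapse cosets to get ${\cal T}(G)\cong mK_{s/|Z(G)|}$, and invoke Proposition \ref{caracterizacao}. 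Both arguments work, and yours buys the extra conclusion that the centralizers $C_G(x)$ are abelian; but note that the paper's whole purpose here is to \emph{correct} an error in that very corollary of Akbari--Moghaddamfar, so it deliberately re-derives the disconnectedness from scratch rather than lean on the reference --- your citation is defensible (the paper's introduction endorses precisely the part you use), but a self-contained derivation of $\lambda=k-1$ and $\mu=0$ is cleaner in context. Two small points of precision: in the ``if'' direction the strong-regularity constraint is $0<k=s-1<v-1$, so you need $s\geq 2$, not merely $s>0$; this does hold, since $s=([C_G(x):Z(G)]-1)|Z(G)|=1$ would force $|C_G(x)|=2$ for every non-central $x$ and hence $G$ elementary abelian. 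Also, your component-versus-coset correspondence is exactly the blow-up described at the start of Section 2, so that step is sound as stated.
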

\begin{proof}
Suppose that $\Gamma(G)$ is a strongly regular graph with parameters  $(v,k,\lambda,\mu)$. As $\Gamma(G)$ is regular, from Theorem \ref{scGgammaG} it follows $|Z(G)|>1$. For all $x\in G\setminus Z(G)$ and $z_1,z_2\in Z(G)$, with $z_1\neq z_2$, it is clear that $C_G(xz_1)=C_G(xz_2)$ and $[xz_1, xz_2]=1$. Thus, we must have $\lambda=k-1$. As the graph $\Gamma(G)$  is not complete, from the relation  $\mu(v-k-1)=k(k-\lambda - 1)$, we obtain  $\mu=0$. Hence, Lemma \ref{completodesconexo} ensures us $\Gamma(G) \cong mK_s$, for some integers $m,s \geq 2$. Clearly, $ms=|G\setminus Z(G)|$ and by calculating the degree of each vertex we get  $s=|C_G(x)| - |Z(G)|=([C_G(x):Z(G)]-1)|Z(G)|$, for all $x\in G\setminus Z(G)$.
\end{proof}

Let $G$ be a  finite non-abelian group.  Since for $x,y \in G$ we have  $[x,y]=1$ if and only if $[g,h]=1$, for all $g\in xZ(G)$, $h\in yZ(G)$,  as a consequence of  Proposition  \ref{correcao} we conclude that if $\Gamma (G)$ is a strongly regular graph, then each connected component of this graph is an union of $n$ non-trivial cosets of $Z(G)$, where $n=[C_G(x):Z(G)]-1$.

Let $p$ be a prime and let  $P$ be an extraspecial $p$-group. If $p$ is odd and $|P|>p^3$,  Theorem \ref{ishik1} and Lemma \ref{gfrpimpar} ensure us that ${\cal T}(P)$ is regular but not strongly regular. 
The next result tells us that  if $G$ is a  group isoclinic to an extraspecial $p$-group of order $p^3$, then ${\cal T}(G)$ is disconnected strongly regular. 

\begin{prop}\label{extraespecialordemcubo2}
Let $G$ be a finite non-abelian group and let $p$ be a prime number. We have  ${\cal T}(G)\cong (p+1)K_{p-1}$ if and only if $G$ is isoclinic to an extraspecial $p$-group of order $p^3$.
\end{prop}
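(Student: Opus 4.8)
The plan is to prove the proposition via the characterization of $\mathcal{T}(G)\cong mK_n$ already obtained in Proposition \ref{caracterizacao}, combined with the isoclinism invariance of $\mathcal{T}(G)$ (Proposition \ref{isoclinismocomutante}) and the classification results of Ito and Ishikawa (Theorems \ref{scGgammaG} and \ref{ishik1}). I would first dispose of the easy implication. If $G$ is isoclinic to an extraspecial $p$-group $E$ of order $p^3$, then by Proposition \ref{isoclinismocomutante} we have $\mathcal{T}(G)\cong\mathcal{T}(E)$, so it suffices to compute $\mathcal{T}(E)$ directly. For such $E$ we have $|E|=p^3$, $|Z(E)|=p$, hence $[E:Z(E)]=p^2$ and $\mathcal{T}(E)$ has $p^2-1$ vertices. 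Since $E$ is extraspecial, every non-central $x$ has $C_E(x)$ of index $p$ (it is of conjugate type $\{1,p\}$ by Theorem \ref{ishik1}), so $[C_E(x):Z(E)]=p$; moreover $C_E(x)$ is abelian because it has order $p^2$. By Proposition \ref{grauX}(i) each vertex has degree $p-2$, and Proposition \ref{caracterizacao} (with $n+1=p$, i.e.\ $n=p-1$, and $mn=p^2-1$, i.e.\ $m=p+1$) gives $\mathcal{T}(G)\cong(p+1)K_{p-1}$. (When $p=2$ one should note $K_{p-1}=K_1$ is a single vertex, so $\mathcal{T}(G)\cong I_3$; the statement still reads correctly.)

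For the converse, suppose $\mathcal{T}(G)\cong(p+1)K_{p-1}$. First, $\mathcal{T}(G)$ is regular, so $G$ is of conjugate type $\{1,q^r\}$ for some prime $q$ and integer $r\ge 1$, and Theorem \ref{scGgammaG} gives $G=A\times Q$ with $A$ abelian and $Q$ a Sylow $q$-subgroup of the same conjugate type. By \cite[page 286]{berkovich} (invoked the same way as in the proof of Theorem A) $G$ and $Q$ are isoclinic, so $\mathcal{T}(Q)\cong\mathcal{T}(G)\cong(p+1)K_{p-1}$, and it is enough to identify $Q$. Now apply Proposition \ref{caracterizacao} to $Q$: with $m=p+1$ and $n=p-1$ it yields $[Q:Z(Q)]-1=(p+1)(p-1)=p^2-1$, hence $[Q:Z(Q)]=p^2$, and $[C_Q(x):Z(Q)]=n+1=p$ with $C_Q(x)$ abelian, for every non-central $x$. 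Thus $[Q:C_Q(x)]=p$ for all non-central $x$, i.e.\ $Q$ is of conjugate type $\{1,p\}$; in particular $q=p$. By Theorem \ref{ishik1}, $Q$ is isoclinic to an extraspecial $p$-group $E$; since isoclinic groups have isomorphic central quotients, $[E:Z(E)]=[Q:Z(Q)]=p^2$, and as $|Z(E)|=p$ we get $|E|=p^3$. Hence $G$ is isoclinic to $Q$, which is isoclinic to $E$, an extraspecial $p$-group of order $p^3$, as required.

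The only genuine subtlety is the degenerate case $p=2$: then $p-1=1$ and $\mathcal{T}(G)\cong 3K_1=I_3$, the edgeless graph on three vertices. Here ``$C_G(x)$ abelian'' in Proposition \ref{caracterizacao} is automatic and the degree condition reads $[C_G(x):Z(G)]=2$, which is exactly conjugate type $\{1,2\}$; Theorem \ref{ishik1} then forces $G$ to be isoclinic to an extraspecial $2$-group, and the order count $[G:Z(G)]=4$ pins that group down to order $8$ (i.e.\ $D_8$ or $Q_8$). So the argument goes through uniformly, and I expect the write-up to be short; the main thing to check carefully is simply that Proposition \ref{caracterizacao} applies with $n=1$ (which the remark following it already flags) and that all the index bookkeeping is consistent across the isoclinism $G\rightsquigarrow Q\rightsquigarrow E$.
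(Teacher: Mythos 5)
Your proposal is correct and follows essentially the same route as the paper: reduce to a Sylow subgroup via Ito's theorem and isoclinism, apply Proposition \ref{caracterizacao} to get conjugate type $\{1,p\}$, invoke Ishikawa's theorem, and count $[G:Z(G)]=p^2$ to pin down the order $p^3$; the converse is the same direct computation from Propositions \ref{caracterizacao} and \ref{isoclinismocomutante}. Your extra care in identifying the prime (deriving $q=p$ rather than assuming it) and in checking the degenerate case $p=2$ is a minor refinement of the paper's argument, not a different approach.
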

\begin{proof}
Let $G$ be a group such that ${\cal T}(G)\cong (p+1)K_{p-1}$. Hence $[G:Z(G)]=p^2$ and as the graph ${\cal T}(G)$ is regular, by Lemma \ref{scGgammaG} and Proposition \ref{isoclinismocomutante}, we may assume that $G$ is a $p$-group. Now Proposition \ref{caracterizacao} provides us  $[C_G(x):Z(G)]=p$, for all $x\in G\setminus Z(G)$. Thus we conclude $[G:C_G(x)]=p$, for all $x\in G\setminus Z(G)$ and, consequently,  $G$ is of conjugate type $\{1,p\}$. From Theorem \ref{ishik1} it follows that $G$ is isoclinic to an extraspecial $p$-group, say $P$. Since $[P:Z(P)]=[G:Z(G)]=p^2$ and $|Z(P)|=p$, we must have $|P|=p^3$. The converse follows from Proposition \ref{caracterizacao} and Proposition \ref{isoclinismocomutante}.
\end{proof}

\end{document}